\newcommand{\be}{\begin{eqnarray}}
\newcommand{\ee}{\end{eqnarray}}
\newcommand{\ben}{\begin{eqnarray*}}
\newcommand{\enn}{\end{eqnarray*}}
\newcommand{\mi}{{m^{(1)}}}
\newcommand{\mii}{{m^{(2)}}}
\newcommand{\rhi}{{\rho^{(1)}}}
\newcommand{\rhii}{{\rho^{(2)}}}
\newcommand{\n}{{\mathbb{R}^n}}
\newcommand{\nn}{{\mathbb{R}^{2n}}}
\newcommand{\supp}{\text{supp}}
\newtheorem{theorem}{\textbf Theorem}[section]
\newtheorem{Lemma}{\textbf Lemma}[section]
\newtheorem{corollary}{\textbf Corollary}[section]
\newtheorem{definition}{\textbf Definition}[section]
\def\gfz{\genfrac{}{}{0pt}{}}
\newcounter{remark}
\newenvironment{remark}%
{\par \stepcounter{remark} {\it Remark
\arabic{section}.\arabic{remark}.}~}
\begin{document}

\title{ $L^p$ Boundedness of rough  Bi-parameter Fourier Integral Operators}
\thanks {The first author's research was partly supported
by a grant of NNSF of China  (No. 11371056) and the second and third authors' research was partly supported  by  a US NSF grant (DMS-1301595) and the third author was partly supported by a Simons Fellowship from the Simons Foundation. Corresponding Author: Guozhen Lu at gzlu@wayne.edu. }

\author{Qing Hong}
\address{School  of Mathematical Sciences\\
 Beijing Normal  University\\
  Beijing 100875, China \\
Email: hongqingjx@126.com}
\author{Guozhen Lu }
\address{Department of Mathematics\\
Wayne State University\\
Detroit, MI 48202, USA\\
Email: gzlu@wayne.edu}
\author{Lu Zhang}
\address{Department of Mathematics\\
Wayne State University\\
Detroit, MI 48202, USA\\
Email: eu4347@wayne.edu}
\date{}

\maketitle

\begin{abstract}

In this paper, we  will investigate the boundedness of the bi-parameter Fourier integral operators (or FIOs for short) of the following form:
$$T(f)(x)=\frac{1}{(2\pi)^{2n}}\int_{\mathbb{R}^{2n}}e^{i\varphi(x,\xi,\eta)}\cdot a(x,\xi,\eta)\cdot\widehat{f}(\xi,\eta)d\xi d\eta,$$
 where for $x=(x_1,x_2)\in \mathbb{R}^{n}\times \mathbb{R}^{n}$ and $\xi,\eta \in \mathbb{R}^{n}\setminus\{0\}$,
the amplitude $a(x,\xi,\eta)\in L^\infty BS^m_\rho$ and the phase function is of the form
$
\varphi(x,\xi,\eta)=\varphi_1(x_1,\xi)+\varphi_2(x_2,\eta)$ with $\quad \varphi_1,\varphi_2 \in L^\infty \Phi^2 (\mathbb{R}^{n}\times\mathbb{R}^{n}\setminus\{0\})$ and
   $\varphi(x, \xi, \eta)$ satisfies a certain rough non-degeneracy condition $\eqref{RND}$.

The study of these operators are motivated by the $L^p$ estimates for one-parameter FIOs and bi-parameter Fourier multipliers  and pseudo-differential operators. We will first define the bi-parameter FIOs  and then study the $L^p$ boundedness of such operators  when their phase functions have compact support in frequency variables with certain necessary non-degeneracy conditions. We will then establish the $L^p$ boundedness of the more general FIOs with amplitude $a(x,\xi,\eta)\in L^\infty BS^m_\rho$ and non-smooth phase function $\varphi(x,\xi,\eta)$ on $x$ satisfying a rough non-degeneracy condition.

\medskip

\noindent
{\bf Keywords:} Bi-parameter Fourier integral operators, Seeger-Sogge-Stein decomposition, $L^p$ boundedness, non-smooth amplitude and phase functions, non-degeneracy condition.

\noindent
{\bf Mathematics Subject Classification:} 35S30; 42B20.
\medskip

{\bf
\noindent
\date{\today}}

\end{abstract}
%%%%%%%%%%%%%%%%%%%%%%%%%%%%%%%%%%%%
\section{\bf Introduction}

L. H\"{o}rmander \cite{H} defined the Fourier integral operator (FIO)  $T$ in the following form
$$Tf(x)=\int a(x,\xi)\widehat{f}(\xi)e^{i\varphi(x,\xi)}d\xi, $$
for $f$ in the class of Schwartz functions $\mathcal{S}(\n)$,
where $x \in \n $ is the spatial variable, $\xi\in \n$ is the frequency variable, $a$ is the amplitude function and $\varphi$ is the phase function. In the study of FIOs, we often assume  $a\in S^m_{\rho,\delta}$, that is, a collection of smooth functions that satisfy
\begin{equation}\label{amplitude-oneparameter}
\big|\partial_x^\alpha\partial_\xi^\beta a(x,\xi)\big|
\leq C_{\alpha,\beta}(1+|\xi|)^{m+\delta|\alpha|-\rho|\beta|},\quad a\in C^\infty(\n \times \n)
\end{equation}
for $m\in\mathbb{R}$, $\rho,\delta\in[0,1]$ and all multi-indices $\alpha$ and $\beta$. The phase function $\varphi\in C^\infty$ is homogeneous of degree 1 in $\xi$ and satisfies the non-degeneracy condition, that is the modulus of the determinant of the mixed Hessian of the phase does not vanish. \\

The local $L^2$ boundedness of FIOs with non-degenerate phase functions was investigated by G. Eskin \cite{E} for $a\in S^0_{1,0}$ and by L. H\"{o}rmander \cite{H}  for $a\in S^0_{\rho,1-\rho},\ \rho\in[1/2,1]$.  A. Seeger, C. Sogge and E. Stein \cite{SSS} further established  the local $L^p$ $ (1< p<\infty)$ boundedness of smooth FIOs with non-degenerate and homogeneous $\varphi$ for $a\in S^m_{\rho,1-\rho}$ compactly supported in $x$, provided that $ \ \rho\in[1/2,1],\ m\leq (\rho-n)\big|\frac1p-\frac12\big|$. For more extensive study of local boundedness of FIOs, we refer to the book of C. Sogge \cite{S} and references therein. \\

For the global $L^2$ boundedness of FIOs when $\varphi\in C^\infty(\n \times \n\setminus \{0\})$  is homogeneous and $a\in S^0_{0,0}$, see e.g.   D. Fujiwara \cite{F}.  Applications to smoothing estimates for evolution partial differential equations require  non-smooth  phases, in addition to minimizing the decay assumptions on the regularity of symbols, see  the works of M. Ruzhansky and M. Sugimoto \cite{RS2} and  more general weighted Sobolev $L^2$ estimates   given by the same authors  in  \cite{RS3}\\

The global $L^p$ boundedness (when $a$ is in the so called SG classes) was established by E. Cordero, F. Nicola and L. Rodino in \cite{CNR}. Moreover, for the general amplitudes $a$ from the classes $ L^p S^m_{\rho,\delta}$ where $\rho,\delta\in[0,1]$, which depends on the growth/decay order of the amplitude in $x$ and $y$ variables,  S. Coriasco and M. Ruzhansky \cite{CR}, and Rodr\'{i}guez-L\'{o}pez and W. Staubach \cite{RS} proved  the $L^p$ estimate of the rough FIOs  with non-smooth amplitude on $x$ and smooth phases. The global $L^p$ boundedness of the rough FIOs with non-smooth phases $\varphi\in L^\infty\Phi^2$ was carried out by D. Dos Santos Ferreira and W. Staubach \cite{RS}. We refer the reader to Section 2 for definitions of the classes $ L^p S^m_{\rho,\delta}$ for the amplitudes $a$  and
$ L^\infty\Phi^2$ for the phase functions $\varphi$.
\medskip

In the work of Seeger, Sogge and Stein \cite{SSS}, the following boundedness  of Fourier integral operators (FIOs) was established.

\begin{theorem}\label{th-SSS}
Suppose that $1<p<\infty$ and $m\leq-(n-\rho)\big|\frac1p-\frac12\big|,\rho\in[1/2,1]$. Assume also that the amplitude function $a(x,\xi)\in S^m_\rho$ and the phase function $\varphi(x,\xi)\in \Phi^2$ satisfy the strong non-degeneracy condition (or SND for short) \eqref{SND}. Then we have that the FIO
\begin{equation}
\label{smooth-fio}
Tf(x)=\int a(x,\xi)\widehat{f}(\xi)e^{i\varphi(x,\xi)}d\xi
\end{equation}
is a bounded operator from $L^p_{comp}$ to $L^p_{loc}$.
\end{theorem}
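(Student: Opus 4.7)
The plan is to follow the classical Seeger--Sogge--Stein strategy: reduce the $L^p$ estimate to two endpoints via an analytic family of operators, then attack each endpoint using dyadic decompositions in frequency that are adapted to the non-degeneracy of the phase.

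First, introduce an analytic family $\{T_z\}_{z\in\mathbb{C}}$ whose amplitudes have order varying linearly with $\mathrm{Re}\,z$, so that the desired estimate at the critical order $m=-(n-\rho)|1/p-1/2|$ appears as complex interpolation between the $L^2\to L^2$ endpoint (at order $0$) and the $H^1\to L^1$ endpoint (at order $-(n-\rho)/2$). The $L^2$ endpoint under SND is already known from H\"{o}rmander's work, so the problem reduces to establishing the $H^1\to L^1$ boundedness of $T$ when $m=-(n-\rho)/2$.

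For this endpoint, I would decompose $a=\sum_{k\ge 0}a_k$ with each $a_k$ supported in an annulus $|\xi|\sim 2^k$, and then apply a second (angular) dyadic decomposition adapted to the order $\rho$: cover $S^{n-1}$ by spherical caps at the critical angular scale determined by $\rho$, and write $T_k=\sum_{\nu} T_k^\nu$ where the amplitude of $T_k^\nu$ is localized to the cone over the $\nu$-th cap. On each cap, freeze a direction $\xi^k_\nu$ and linearize the phase as
\[
\varphi(x,\xi)=x\cdot\nabla_\xi\varphi(x,\xi^k_\nu)+h^k_\nu(x,\xi),
\]
and perform the change of variables $y=\nabla_\xi\varphi(x,\xi^k_\nu)$, which is bi-Lipschitz with uniform constants in $(k,\nu)$ by SND. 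This turns $T_k^\nu$ into an operator whose kernel is concentrated in a curvilinear tube and decays rapidly outside it by integration by parts in $\xi$, exploiting the anisotropic symbol estimates of $h^k_\nu$. For a standard $H^1$ atom $\mathfrak{a}$ supported in a ball $B$, I would split $\|T\mathfrak{a}\|_{L^1}$ into the dilated image of $B$ under the canonical relation of $\varphi$ (volume $\lesssim 1$, handled by Cauchy--Schwarz and the $L^2$ bound) and its complement (handled by summing the tube decay against the amplitude bound $|a_k^\nu|\lesssim 2^{km}$, where $m=-(n-\rho)/2$ is precisely what balances the number of tubes against the tube volume).

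The main obstacle is verifying the anisotropic symbol-type estimates on the remainder phase $h^k_\nu$ uniformly in $(k,\nu)$, which depends crucially on $\varphi\in\Phi^2$, homogeneity of degree one in $\xi$, and the SND condition; these estimates are what legitimize the integration by parts in $\xi$ on each piece and make the tube decay quantitative. A secondary technical point is the almost-orthogonality between distinct caps, obtained through a stationary-phase analysis of $T_k^\nu(T_k^{\nu'})^*$ and summed via a Cotlar--Stein argument, which is needed to keep the $L^2$-type estimates sharp after the angular decomposition.
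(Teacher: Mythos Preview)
The paper does not prove this theorem. Theorem~\ref{th-SSS} is stated in the introduction as a known result of Seeger, Sogge and Stein \cite{SSS} and is used only as background and motivation; the paper's original contributions concern the bi-parameter operators in Theorems~\ref{th-a}--\ref{th-d}, whose proofs do employ the Seeger--Sogge--Stein angular decomposition (recalled in Section~\ref{sec-decomp}) but never revisit the one-parameter local $L^p$ result itself. So there is no ``paper's own proof'' to compare against.

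That said, your outline is a faithful sketch of the original argument in \cite{SSS}: analytic interpolation between the $L^2$ endpoint (H\"ormander) and the $H^1\to L^1$ endpoint at order $-(n-\rho)/2$, with the latter handled via the dyadic-in-$|\xi|$ plus second-dyadic angular decomposition, phase linearization $\varphi(x,\xi)=\langle\nabla_\xi\varphi(x,\xi^k_\nu),\xi\rangle+h^k_\nu(x,\xi)$, and kernel estimates from integration by parts. Two small clarifications: (i) the angular cap scale in \cite{SSS} is $2^{-k/2}$ regardless of $\rho$; the parameter $\rho$ enters only through the symbol bounds on $a$ and hence the size of the pieces, not through the geometry of the decomposition; (ii) the $L^2$ input really is H\"ormander's theorem applied globally to $T$, not a Cotlar--Stein almost-orthogonality argument on the $T_k^\nu$---the latter is unnecessary here, though it would also work. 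Neither point is a genuine gap; your plan would go through.
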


In \cite{RS},   global boundedness of FIOs was established by S. Rodr\'{i}guez-L\'{o}pez and W. Staubach when the phase function $\varphi(x,\xi)$ and the symbol $a(x,\xi)$ are not necessarily smooth with respect to $x$ (see Kenig and Staubach \cite{KS} for such type of global $L^p$ estimates for pseudo-differential operators with non-smooth amplitude).  

\begin{theorem}{$($\cite{RS}$)$}
\label{th-FS}
Suppose that the amplitude function $a(x,\xi)\in L^\infty S^m_\rho$ and the phase function $\varphi(x,\xi)\in L^\infty \Phi^2$ satisfy the rough non-degeneracy condition (or RND for short) \eqref{RND}. Then the FIO
\begin{equation}
\label{smooth-fio}
Tf(x)=\int a(x,\xi)\widehat{f}(\xi)e^{i\varphi(x,\xi)}d\xi
\end{equation}
is a bounded operator from $L^p$ to $L^p$ $(1\leq p\leq \infty)$ provided that
\begin{itemize}
  \item[(i)] $m<\frac{n(\rho-1)} {p}-\frac{(n-1)}{2p}$ when $1\leq p\leq 2$,
  \item[(ii)] $m<\frac{n(\rho-1)}{2}-\frac{n-1}{2}\big(1-{1\over p}\big)$ when $2\leq p \leq \infty$.
\end{itemize}
\end{theorem}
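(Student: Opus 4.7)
The plan is to follow the Seeger--Sogge--Stein program for smooth FIOs, as in \cite{SSS}, adapted to handle roughness in $x$ and then concluded by interpolation. First I would perform a Littlewood--Paley decomposition in the frequency variable: choose $\psi_j\in C_c^\infty(\mathbb{R}^n)$ with $\sum_{j\geq 0}\psi_j\equiv 1$, $\operatorname{supp}\psi_j\subset\{|\xi|\sim 2^j\}$ for $j\ge 1$, and write $T=\sum_j T_j$ where $T_j$ has amplitude $a_j(x,\xi)=a(x,\xi)\psi_j(\xi)$. The low-frequency term $T_0$ is treated directly by an $L^p$ convolution estimate using the $L^\infty$ bound of $a_0$. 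For each $j\ge 1$ I would apply the second dyadic decomposition: partition the annulus $|\xi|\sim 2^j$ into angular caps $\Gamma_j^\nu$ of aperture $\sim 2^{-j/2}$ centered at unit vectors $\xi_j^\nu$, giving roughly $2^{j(n-1)/2}$ pieces $T_j^\nu$.

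On each plate I would linearize the phase around $\xi_j^\nu$: write $\varphi(x,\xi)=\nabla_\xi\varphi(x,\xi_j^\nu)\cdot\xi+r_j^\nu(x,\xi)$, where the homogeneity of degree one and the $\Phi^2$-control yield that $e^{ir_j^\nu}$ times the original amplitude is a symbol of the same order supported on the plate, uniformly in $x$. Thus $T_j^\nu f(x)$ becomes a translation by the $x$-dependent vector $\nabla_\xi\varphi(x,\xi_j^\nu)$ applied to a plate-supported multiplier. The rough non-degeneracy condition \eqref{RND}, which gives a uniform lower bound on differences of $\nabla_\xi\varphi$ rather than requiring a nonvanishing mixed Hessian, is what makes $x\mapsto \nabla_\xi\varphi(x,\xi_j^\nu)$ a globally bi-Lipschitz change of variables uniformly in $\nu$; this replaces the classical SND hypothesis and is essential for the global (as opposed to local) estimate.

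For the $L^2$ control of $T_j$ I would run a Cotlar--Stein almost-orthogonality argument over the angular family $\{T_j^\nu\}_\nu$: the $\xi$-supports overlap boundedly, and the kernels of $T_j^\nu (T_j^{\nu'})^*$ vanish (by non-stationary phase in $\xi$, where $a$ and $\varphi$ are smooth) outside thin tubes, producing the bound $\|T_j\|_{L^2\to L^2}\lesssim 2^{j[m+n(1-\rho)/2]}$. For the $p=1$ endpoint I would estimate the kernel $K_j^\nu(x,y)$ on each plate directly: repeated integration by parts in $\xi$ (again admissible because the roughness sits in $x$) gives rapid decay off a plate in $y$ of measure $\sim 2^{-j(1+(n-1)/2)}$; summing in $\nu$ and $j$ against the hypothesized threshold for $m$ yields weak-type $(1,1)$ boundedness. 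Real interpolation gives (i) for $1\le p\le 2$, and duality to the same argument applied to $T^*$ (which has the same structure, with $\varphi$ replaced by $-\varphi(y,\xi)$ and amplitude $\overline{a}$) yields (ii) for $2\le p\le\infty$.

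The main obstacle is precisely the non-smoothness in $x$: the usual sharpening of the kernel estimate requires $\partial_x^\alpha a$ and $\partial_x^\alpha \varphi$, which are unavailable here. The standard remedy, which I would follow, is to regularize in $x$ at the scale adapted to the frequency piece, convolving $a(\cdot,\xi)$ and $\varphi(\cdot,\xi)$ with a bump of width $\sim 2^{-j\rho}$ in $x$; $x$-derivatives then fall on the mollifier with tame losses. The difference between the original and regularized operators is controlled by the $L^\infty$-norms of $a$ and by \eqref{RND} (which survives mollification because it is a finite-difference condition), at the cost of a further $2^{-j\rho}$ factor that is absorbed in $m$. The careful balancing of mollification scale, plate geometry, and the frequency gain from the threshold on $m$ is what produces the sharp exponents in (i) and (ii).
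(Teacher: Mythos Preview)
Your overall architecture---Littlewood--Paley plus Seeger--Sogge--Stein angular decomposition, linearization of the phase on each cap, endpoint bounds plus interpolation---matches what the paper does (the paper cites this one-parameter theorem from \cite{RS} and proves its bi-parameter analogue in Section~\ref{th-lp}; that proof is the natural point of comparison). However, two of your steps have genuine problems.

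First, the mollification in $x$ you propose at the end is both unnecessary and, as stated, unworkable. It is unnecessary because the kernel estimates here use only $\xi$-derivatives of $a$ and $\varphi$, never $x$-derivatives: integration by parts on each cap is carried out entirely in $\xi$ via operators like $L^N=(1-\partial_{\xi_1}^2-2^{-j}\partial_{\xi'}^2)^N$ (see Lemma~\ref{le-kernel} and the proofs of Theorems~\ref{th-b}--\ref{th-d}), and the resulting pointwise bounds on $K_j^\nu(x,y)$ are uniform in $x$. The roughness in $x$ enters only when one integrates the kernel bound in $x$, and there it is handled by the non-smooth change-of-variables lemma (Lemma~\ref{le-FS} and Corollary~\ref{changev}), which uses exactly the RND condition \eqref{RND}. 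Your mollification would not work anyway because with $a(\cdot,\xi)$ merely in $L^\infty_x$ there is no modulus of continuity, so $\|a-a*\phi_\epsilon\|_{L^\infty}$ need not be small and the error term cannot be absorbed.

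Second, your duality step for $2\le p\le\infty$ is not as claimed. The formal adjoint $T^*g(y)=\int\overline{K(x,y)}\,g(x)\,dx$, with $K(x,y)=\int a(x,\xi)e^{i(\varphi(x,\xi)-y\cdot\xi)}\,d\xi$, is not an FIO of the same shape: the rough variable $x$ is now the integration variable rather than the output variable, so one cannot simply rerun the argument. The paper avoids this by proving $L^1$, $L^2$, and $L^\infty$ bounds for $T$ directly: Schur-type estimates $\sup_x\int|K_j^\nu(x,y)|\,dy$ and $\sup_y\int|K_j^\nu(x,y)|\,dx$ (the latter via the non-smooth change of variables) give strong $L^1$ and $L^\infty$, a $TT^*$ computation on each individual cap followed by summation over caps gives $L^2$, and Riesz--Th\"orin interpolation finishes. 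In particular the paper obtains strong $L^1\to L^1$ rather than weak-type, and does not use Cotlar--Stein across the angular family.
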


\vskip0.3cm

Motivated by these  works on $L^p$ estimates for one-parameter FIOs and the $L^p$ estimates for multi-parameter singular integral operators (see e.g., R. Fefferman and E. M. Stein \cite{FS} and Journ\'e \cite{Jo}),  and more recent works of the $L^p$ estimates for multi-parameter Coifman-Meyer Fourier multipliers of Muscalu, Pipher, Tao and Thiele \cite{MPTT1, MPTT2} (see also \cite{CL}),  and the $L^p$ estimates for multi-parameter pseudo-differential operators (see \cite{MS}, \cite{DL}, \cite{HL}),
our main goal in this paper is to study the $L^p$ estimates for  bi-parameter FIOs with the non-smooth phases and amplitudes with respect to $x$. That is, we will study the operator
\begin{equation}
\label{opdef}
T(f)(x)=\int_{\mathbb{R}^{2n}}e^{i\varphi(x,\xi,\eta)}\cdot a(x,\xi,\eta)\cdot\widehat{f}(\xi,\eta)d\xi d\eta,
\end{equation}
where $x=(x_1,x_2)\in \mathbb{R}^{n}\times \mathbb{R}^{n}$, $\xi,\eta \in \mathbb{R}^{n}\setminus\{0\}$ and
\begin{equation}
\label{phasedef}
\varphi(x,\xi,\eta)=\varphi_1(x_1,\xi)+\varphi_2(x_2,\eta), \quad \varphi_1,\varphi_2 \in L^\infty \Phi^2 (\mathbb{R}^{n}\times\mathbb{R}^{n}\setminus\{0\})
\end{equation}
 with $a(x,\xi,\eta)\in L^\infty BS^m_\rho$ as defined in Definition 2.3 in Section 2 and $\varphi_1(x_1, \xi)$ and $\varphi(x_2, \eta)$ satisfy the rough non-degeneracy condition $\eqref{RND}$.
 (See Section \ref{def-th} for definitions and the notations used here.)

\vskip0.5cm

We now make some remarks on the assumptions on the phase functions $a(x, \xi, \eta)$ and amplitudes $\varphi(x, \xi, \eta)$ in the bi-parameter setting of FIOs and explain that these amplitudes and phase functions are not necessarily covered in the classical case of one-parameter FIOs.

\begin{remark}
By Definition 2.3, it is easy to see that $a(x, \xi,\eta)\in  L^\infty BS^m_\rho$ satisfies weaker condition than the assumption on the amplitude in (\ref{amplitude-oneparameter}) in the one-parameter setting. Therefore, the bi-parameter  FIOs we are considering in this paper indeed covers a wider class of amplitudes than those in the one-parameter case.

 \end{remark}
\medskip

\begin{remark}
  The assumption for the phase functions $\varphi(x,\xi,\eta)$ in $\eqref{phasedef}$  are given in a way where variables are separated in different parameters. We will see such an assumption is necessary. Recalling that  in the study of the single parameter Fourier integral operators, the phase functions $\varphi(x,\xi)$ are required to be positively homogeneous of degree $1$ in $\xi$ so that  Euler's theorem can be used. Also, the phase functions need to satisfy some non-degeneracy conditions, as in Definition $\ref{Strong-Nondegeneracy}$ or $\ref{Nonsmooth-Nondegeneracy}$. In our bi-parameter setting, similar conditions are needed. On one hand, we need to make the phase functions positively homogeneous of degree $1$ in both $\xi$ and $\eta$ in order to use  Euler's theorem. On the other had, we also need to use the non-degeneracy conditions in separate variables. Therefore, it is necessary to make the phase functions defined in $\eqref{phasedef}$ as the sum of two functions in different variables. Moreover, there are phase functions satisfying our conditions but not those  used in the single parameter setting. For example, the phase functions in the single parameter version of Theorem $\ref{th-e}$ on $\mathbb{R}^{2n}$ should satisfy
  \begin{eqnarray}\label{singlecondition}
  \sup\limits_{(\xi,\eta) \in \mathbb{R}^{2n}\setminus\{(0,0)\}}(|\xi|+|\eta|)^{-1+|\alpha_1|+|\alpha_2|}\big\|\partial_{\xi}^{\alpha_1}\partial_\eta^{\alpha_2}\varphi(x,\xi,\eta)\big\|_{L^\infty(\mathbb{R}^{2n})}< \infty.
  \end{eqnarray}
  for all multi-indices $\alpha_1,\alpha_2$ with $|\alpha_1|+|\alpha_2|\geq 2$.

  However, in bi-parameter setting,   $\eqref{phasedef}$ implies
  \begin{equation} \label{bicondition}
   \sup\limits_{\substack{\xi \in \mathbb{R}^n\setminus\{0\} }} \sup\limits_{\substack{\eta \in \mathbb{R}^n\setminus\{0\} }}|\xi|^{-1+|\alpha_1|} \big\|\partial_{\xi}^{\alpha_1}\varphi(x,\xi,\eta)\big\|_{L^\infty(\mathbb{R}^{2n})}<\infty.
  \end{equation}
  for all  multi-indices $\alpha_1$ satisfying $|\alpha_1|\geq 2$. Note that the condition $\eqref{bicondition}$ is actually weaker than $\eqref{singlecondition}$ for $|\alpha_1|\geq 2, \, |\alpha_2|=0$.

Therefore,  there are  phase functions $\varphi(x, \xi, \eta)$ in the bi-parameter FIOs that are not included in those considered 
in the one-parameter FIOs.

\end{remark}

\vskip0.5cm
The main results of this paper are as follows:

\begin{theorem}\label{th-a}
Let $0<r\leq\infty$ and $ 1\leq p,q\leq\infty$ satisfy $\frac1p+\frac1q=\frac1r$. If the amplitude function $a(x,\xi,\eta)\in L^p BS^m_\rho$ for $m\leq0$, $0\leq \rho\leq 1$ are compactly supported on $\xi$ and $\eta$ and the phase functions $\varphi_1(x_1,\xi), \varphi_2(x_2,\eta)\in \Phi^2$ satisfy the strong non-degeneracy condition (\ref{SND}). Then the biparameter FIO
$$T(f)(x)=\int_{\mathbb{R}^n\times\mathbb{R}^n}e^{i\varphi_1(x_1,\xi)}e^{i\varphi_2(x_2,\eta)} a(x,\xi,\eta)\cdot\widehat{f}(\xi,\eta)d\xi d\eta$$
is bounded from $L^q$ to $L^r$.
\end{theorem}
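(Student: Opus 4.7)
The plan is to realize the bi-parameter FIO as an integral operator
\[
T(f)(x) = \int_{\mathbb{R}^{2n}} K(x,y)\,f(y)\,dy, \qquad K(x,y) = \int_{\mathbb{R}^{2n}} e^{i[\varphi_1(x_1,\xi)+\varphi_2(x_2,\eta) - y_1\cdot\xi - y_2\cdot\eta]}\,a(x,\xi,\eta)\,d\xi\,d\eta,
\]
obtain a pointwise kernel estimate that decays in $x-y$ separately in each of the two parameters, and conclude by combining H\"older's inequality with Young's convolution inequality. Writing $A(x):=\|a(x,\cdot,\cdot)\|_{BS^m_\rho}$, the hypothesis $a\in L^p BS^m_\rho$ gives $\|A\|_{L^p}\leq \|a\|_{L^p BS^m_\rho}$.

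The central estimate to prove is
\[
|K(x,y)| \leq C_N\, A(x)\,(1+|x_1-y_1|)^{-N}\,(1+|x_2-y_2|)^{-N}
\]
for every $N\in\mathbb{N}$. To derive it I would integrate by parts $N$ times in $\xi$ and, independently, $N$ times in $\eta$, using the product structure of the phase and the smoothness of $a$ in the frequency variables (with boundary terms vanishing thanks to compact support). Each integration by parts produces a factor of $|\nabla_\xi\varphi_1(x_1,\xi) - y_1|^{-1}$ (respectively $|\nabla_\eta\varphi_2(x_2,\eta) - y_2|^{-1}$) and differentiates the symbol; the compact $(\xi,\eta)$-support together with the $\Phi^2$ regularity of the phases and the $BS^m_\rho$-regularity of $a$ keeps all such derivatives uniformly controlled by $A(x)$. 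The strong non-degeneracy condition $|\det\partial_x\partial_\xi\varphi_i|\geq c>0$ is then used to upgrade the natural gradient-phase decay into $x-y$ decay: for each fixed $\xi$ in the compact $\xi$-support of $a$, SND forces the map $x_1\mapsto\nabla_\xi\varphi_1(x_1,\xi)$ to be a bilipschitz diffeomorphism of $\mathbb{R}^n$ (uniformly in $\xi$), which yields $|\nabla_\xi\varphi_1(x_1,\xi) - y_1| \gtrsim |x_1-y_1|$ after a bilipschitz relabeling of $y_1$; the same argument applies to $\nabla_\eta\varphi_2$.

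With the kernel estimate in hand, choose $N>n$ so that $G(z_1,z_2) := (1+|z_1|)^{-N}(1+|z_2|)^{-N}$ lies in $L^1(\mathbb{R}^{2n})$. The pointwise inequality $|T(f)(x)| \leq C\,A(x)\,(G*|f|)(x)$ follows. Since the relation $1/r = 1/p + 1/q$ makes the exponents $p/r$ and $q/r$ H\"older-conjugate, H\"older's inequality followed by Young's convolution inequality gives
\[
\|T(f)\|_{L^r} \leq \|A\|_{L^p}\,\|G*|f|\|_{L^q}\leq \|G\|_{L^1}\,\|a\|_{L^p BS^m_\rho}\,\|f\|_{L^q},
\]
which is the desired bound.

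The main technical hurdle is producing $x-y$ decay (as opposed to merely $y$ decay) in the kernel estimate. Without invoking the SND, the compactness of the $\xi$-support yields only $|\nabla_\xi\varphi_1|\leq M$, so integration by parts would produce decay in $|y_1|$ alone; this would restrict the conclusion to the diagonal case $q=\infty$, $r=p$, and miss the full off-diagonal range $1/r = 1/p + 1/q$. Carefully upgrading the gradient-phase decay into genuine $x-y$ decay via the strong non-degeneracy and $\Phi^2$ regularity, uniformly in $\xi$ over the compact frequency support and while handling a non-tensorial amplitude $a(x,\xi,\eta)$, is where the substantive work lies.
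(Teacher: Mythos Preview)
Your central kernel estimate $|K(x,y)| \lesssim A(x)(1+|x_1-y_1|)^{-N}(1+|x_2-y_2|)^{-N}$ is false as stated, and the ``bilipschitz relabeling of $y_1$'' does not repair it. Integration by parts in $\xi$ produces a factor $|\nabla_\xi\varphi_1(x_1,\xi)-y_1|^{-1}$ with $\xi$ still inside the integral; SND tells you that $x_1\mapsto\nabla_\xi\varphi_1(x_1,\xi)$ is bilipschitz, \emph{not} that $\nabla_\xi\varphi_1(x_1,\xi)$ is comparable to $x_1$. For instance, with $\varphi_1(x_1,\xi)=x_1\cdot\xi+|\xi|$ one has $\nabla_\xi\varphi_1(x_1,\xi)-y_1=x_1+\xi/|\xi|-y_1$, which vanishes at $y_1=x_1+\xi/|\xi|$ while $|x_1-y_1|=1$. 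So no relabeling of $y_1$ (which would in any case have to be $\xi$-independent to make sense outside the $\xi$-integral) gives the inequality you want, and consequently the pointwise bound $|Tf(x)|\lesssim A(x)(G*|f|)(x)$ fails.

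What does work is to first linearize the phase around a fixed direction: after a conical partition one writes $\varphi_1(x_1,\xi)=\langle\nabla_\xi\varphi_1(x_1,\zeta_1),\xi\rangle+\psi_1(x_1,\xi)$ with $\zeta_1\in S^{n-1}$ fixed and $\psi_1\in L^\infty\Phi^1$ (this is Lemma~2.1 in the paper). Now $e^{i\psi_1}$ is absorbed into the amplitude and the kernel decays in the $\xi$-independent quantity $|\nabla_\xi\varphi_1(x_1,\zeta_1)-y_1|$, yielding $|Tf(x)|\lesssim A(x)\,(G*|f|)\big(\Phi(x)\big)$ with $\Phi(x)=\big(\nabla_\xi\varphi_1(x_1,\zeta_1),\nabla_\eta\varphi_2(x_2,\zeta_2)\big)$; H\"older plus the change of variables $z=\Phi(x)$ (SND bounds the Jacobian of $\Phi^{-1}$) then gives $\|Tf\|_{L^r}\lesssim\|A\|_{L^p}\|f\|_{L^q}$. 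A second point you do not address: since the compact $(\xi,\eta)$-support may contain the origin and $\psi_1$ is homogeneous of degree $1$, the derivatives $\partial_\xi^\alpha e^{i\psi_1}$ are singular at $\xi=0$, so plain integration by parts breaks down; the paper's Lemma~2.2 is designed precisely to handle this and obtain decay $\langle u\rangle^{-n-\mu}$ with $\mu<1$ rather than arbitrary $N$.

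For comparison, the paper takes a different route altogether: it expands $a(x,\xi,\eta)$ as a Fourier series in $(\xi,\eta)$ to write $Tf(x)=\sum_k a_k(x)\,T_\varsigma(f_k)(x)$, where $f_k$ is a translate of $f$ and $T_\varsigma$ is a bi-parameter FIO with an $x$-independent amplitude $\varsigma_1(\xi)\varsigma_2(\eta)$. The $L^q$-boundedness of $T_\varsigma$ is then obtained via a Schur test on its kernel (using Lemmas~2.1--2.2 and the SND change of variables), and the result follows from H\"older and the rapid decay of $\|a_k\|_{L^p}$. Your direct kernel approach, once corrected as above, is arguably shorter and avoids the Fourier-series detour, but it requires exactly the same two technical ingredients (phase linearization and the singular-at-origin kernel lemma) that you have glossed over.
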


\begin{remark}
\begin{itemize}
  \item[(i)] The above theorem requires the smoothness of  phase $\varphi(x,\xi,\eta)$ with respect to $x$, but allows non-smooth amplitude $a(x,\xi,\eta)$.
  \item[(ii)] In particular when $p=\infty$, $T$ is bounded on $L^q \ (1\leq q\leq\infty)$. From theorems below, we will see the $L^q$ estimate still holds with non-smooth phase $\varphi$ which satisfy the rough non-degeneracy condition \eqref{RND} instead.
\end{itemize}

\end{remark}

\vskip0.2cm

\begin{theorem}\label{th-e}
Let T be a bi-parameter FIO:
$$T(f)(x)=\int_{\mathbb{R}^n\times\mathbb{R}^n}e^{i\varphi_1(x_1,\xi)} e^{i\varphi_2(x_2,\eta)}\cdot a(x,\xi,\eta)\cdot\widehat{f}(\xi,\eta)d\xi d\eta$$
with amplitude function $a(x,\xi,\eta)\in L^\infty BS^m_\rho$ and phase functions $\varphi_1(x_1,\xi), \varphi_2(x_2,\eta)\in L^\infty \Phi^2$ satisfy the rough non-degeneracy condition \eqref{RND}. Then $T$ is bounded on $L^p$ $(1\leq p\leq \infty)$ provided that
\begin{itemize}
\item[$(a)$]  $m^{(i)}<\frac{n(\rho^{(i)}-1)} {p}-\frac{(n-1)}{2p}$ when $1\leq p\leq2$,
\item[$(b)$] $m^{(i)}<\frac{n(\rho^{(i)}-1)}{2}-\frac{n-1}{2}\big(1-{1\over p}\big)$ when $2\leq p\leq\infty$.\\
\end{itemize}
\end{theorem}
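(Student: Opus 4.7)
The plan is to combine a bi-parameter frequency decomposition with the one-parameter result of Theorem \ref{th-FS}, exploiting the crucial fact that the phase $\varphi(x,\xi,\eta)=\varphi_1(x_1,\xi)+\varphi_2(x_2,\eta)$ is a sum of decoupled pieces in each parameter. First I would perform a Littlewood--Paley decomposition independently in $\xi$ and in $\eta$, writing $a(x,\xi,\eta)=\sum_{j,k\ge 0}a_{j,k}(x,\xi,\eta)$ with each $a_{j,k}$ supported in the bi-dyadic annulus $|\xi|\sim 2^j,\ |\eta|\sim 2^k$ (with the usual low-frequency modifications when $j=0$ or $k=0$). The bi-parameter symbol estimates satisfied by $a\in L^\infty BS^m_\rho$ give each $a_{j,k}$ pointwise size $O(2^{j\mi+k\mii})$ together with the expected derivative gains $2^{-j\rhi}$ per $\xi$-derivative and $2^{-k\rhii}$ per $\eta$-derivative, uniformly in $x$.

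Within each bi-dyadic piece I would apply the Seeger--Sogge--Stein angular decomposition simultaneously in $\xi$ and in $\eta$: partition the two unit spheres into $O(2^{j(n-1)/2})$ and $O(2^{k(n-1)/2})$ caps of angular width $\sim 2^{-j/2}$ and $\sim 2^{-k/2}$, and on each double cap linearize the two phases around their central directions $\xi_j^\nu$ and $\eta_k^\mu$. Since the exponential factors as $e^{i\varphi_1(x_1,\xi)}e^{i\varphi_2(x_2,\eta)}$, for each frozen $(x_2,\eta)$ the resulting $x_1$-integral is a genuine one-parameter FIO whose amplitude still lies in $L^\infty S^{\mi}_{\rhi}$ uniformly in $(x_2,\eta)$, and Theorem \ref{th-FS} applies to give an $L^p_{x_1}$ bound independent of $(x_2,\eta)$; symmetrically in the other direction. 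Treating $T$ as an iterated composition --- via Minkowski's integral inequality when $p\ge 2$ and a duality argument when $p\le 2$, or equivalently a vector-valued variant of Theorem \ref{th-FS} --- upgrades this to the desired $L^p_x$ bound for each angular piece $T_{j,k}^{\nu,\mu}$.

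The remaining task is to sum: the angular sums in $\nu$ and $\mu$ produce the $\frac{n-1}{2p}$ (for $p\le 2$) and $\frac{n-1}{2}(1-1/p)$ (for $p\ge 2$) losses familiar from the one-parameter SSS argument, applied now in each parameter separately, and the dyadic sums in $j$ and $k$ then converge precisely under the stated thresholds on $\mi$ and $\mii$. I expect the main obstacle to be the coupling between $\xi$ and $\eta$ inside the amplitude: because $a$ does not factor as $a_1(x_1,\xi)\,a_2(x_2,\eta)$, the iterated application of Theorem \ref{th-FS} requires genuinely uniform control in the frozen parameters, and the bi-parameter Cotlar--Stein almost orthogonality between double angular blocks must be established from scratch. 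The $L^2$ endpoint, obtained through such a bi-parameter almost-orthogonality scheme, together with its interpolation against an $L^1$-type estimate via the rough non-degeneracy condition \eqref{RND} used separately in each parameter, will be the technical heart of the proof.
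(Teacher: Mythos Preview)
Your initial decomposition --- bi-parameter Littlewood--Paley followed by a Seeger--Sogge--Stein angular partition in each variable --- is exactly what the paper does. The divergence is in what comes next, and there your proposal has a genuine gap.

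You want to treat each angular block $T^{\nu,\mu}_{j,k}$ by freezing $(x_2,\eta)$, invoking Theorem~\ref{th-FS} in $x_1$, then iterating in the second parameter. The problem you yourself flag is real and is not resolved by anything you write: after freezing $(x_2,\eta)$, the object
\[
x_1 \longmapsto \int e^{i\varphi_1(x_1,\xi)}\,a(x_1,x_2,\xi,\eta)\,\widehat f(\xi,\eta)\,d\xi
\]
is indeed a one-parameter FIO in $x_1$ acting on $g_\eta=(\mathcal{F}_{y_2}f)(\cdot,\eta)$, and one gets a bound $\|T^{(1)}_{x_2,\eta}g_\eta\|_{L^p_{x_1}}\lesssim \|g_\eta\|_{L^p_{y_1}}$ uniformly. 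But the remaining $\eta$-integral,
\[
\int e^{i\varphi_2(x_2,\eta)}\,[T^{(1)}_{x_2,\eta}g_\eta](x_1)\,d\eta,
\]
is \emph{not} a one-parameter FIO in $x_2$: the input $[T^{(1)}_{x_2,\eta}g_\eta](x_1)$ already depends on $x_2$ through the amplitude, so it is not of the form $\widehat h(\eta)$ for any $h$. Minkowski or duality alone cannot collapse this, and ``a vector-valued variant of Theorem~\ref{th-FS}'' is not a theorem that exists in the paper or in the references; you would have to prove it, and its proof would essentially require the direct kernel estimates you are trying to avoid. The bi-parameter Cotlar--Stein scheme you allude to is likewise not supplied.

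The paper does not attempt to reduce to the one-parameter theorem at all. Instead it \emph{reproduces} the one-parameter argument bi-parametrically: for each block $T^{\mu,\nu}_{jk}$ it writes the kernel explicitly, applies the differential operator $L^N=(1-\partial^2_{\xi_1}-2^{-j}\partial^2_{\xi'})^N(1-\partial^2_{\eta_1}-2^{-k}\partial^2_{\eta'})^N$, and obtains a pointwise product-type bound on $K^{\mu,\nu}_{jk}(x,y)$ in terms of $g_j(\nabla_\xi\varphi_1(x_1,\xi^\mu)-y_1)$ and $g_k(\nabla_\eta\varphi_2(x_2,\eta^\nu)-y_2)$. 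From this it reads off the $L^1\to L^1$ and $L^\infty\to L^\infty$ norms of each block directly via Schur's test (using the rough non-degeneracy \eqref{RND} and Lemma~\ref{le-FS} for the $dx$-integration), and handles $L^2$ by estimating the kernel of $T^{\mu,\nu}_{jk}(T^{\mu,\nu}_{jk})^*$ the same way --- no almost-orthogonality between blocks is used, the angular pieces are simply summed with the cardinality loss $2^{(j+k)(n-1)/2}$. The final thresholds then come from Riesz--Th\"orin interpolation among the $L^1$, $L^2$, $L^\infty$ endpoints (Theorems~\ref{th-b}, \ref{th-c}, \ref{th-d}). The coupling of $\xi$ and $\eta$ in the amplitude is never an issue in this route, because the kernel bounds use only the symbol inequalities of Lemma~\ref{le-kernel}, which are already of product type.
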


To prove the above Theorem \ref{th-e}, we will first give  the Seeger-Sogge-Stein decomposition
of the bi-parameter FIOs (see Section 3):
$$T(f)(x)=T_{00}(f)(x)+\sum_{\mu\nu} T^{\mu,\nu}_{jk}(f)(x).$$

We will prove the $L^p$ boundedness of $T_{00}(f)$, and then
we will prove the $L^1\rightarrow L^1,\ L^2\rightarrow L^2,\ L^\infty\rightarrow L^\infty$ boundedness properties of $T$ respectively as follows,   the interpolation argument gives  the desired  $L^p$ estimate.

\begin{theorem}\label{th-local}
Let the amplitude function $a(x,\xi,\eta)\in L^\infty BS^m_\rho$ with $m\in \mathbb{R}$ and $\rho \in [0,1]$, and the phase functions $\varphi_1(x_1,\xi), \varphi_2(x_2,\eta)\in L^\infty \Phi^2$ satisfy $\eqref{RND}$. Then for all $\Phi^1_0(\xi), \Phi^2_0(\eta)\in C_0^\infty(\mathbb{R}^n)$ supported around the origin, the bi-parameter Fourier integral operator
$$T_{00}(f)(x)=\int_{\mathbb{R}^n\times\mathbb{R}^n}e^{i\varphi_1(x_1,\xi)} e^{i\varphi_2(x_2,\eta)}\cdot \Phi^1_0(\xi) \Phi^2_0(\eta) a(x,\xi,\eta)\cdot\widehat{f}(\xi,\eta)d\xi d\eta$$
is bounded on $L^p$ for $p\in [1,\infty]$.
\end{theorem}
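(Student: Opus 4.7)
The plan is to derive a pointwise kernel estimate for $T_{00}$ and then apply Schur's test across the full range $1\leq p\leq\infty$. We write $T_{00}f(x)=\int_{\mathbb{R}^{2n}}K(x,y)f(y)\,dy$ with Schwartz kernel
$$K(x,y)=\int_{\mathbb{R}^{2n}}e^{i[\varphi_1(x_1,\xi)-y_1\cdot\xi+\varphi_2(x_2,\eta)-y_2\cdot\eta]}\Phi_0^1(\xi)\Phi_0^2(\eta)a(x,\xi,\eta)\,d\xi\,d\eta.$$
The decisive structural features are that $\Phi_0^1\otimes\Phi_0^2$ confines the integration to a compact set around the origin, while the separation $\varphi=\varphi_1+\varphi_2$ completely decouples the integrations in $\xi$ and $\eta$.

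In each frequency variable we perform iterated integration by parts with the standard operator
$$L_\xi=\frac{1-i(\nabla_\xi\varphi_1(x_1,\xi)-y_1)\cdot\nabla_\xi}{1+|\nabla_\xi\varphi_1(x_1,\xi)-y_1|^2},$$
and the analogous $L_\eta$, each preserving its exponential. Iterating $N$ times in each variable and using the bounds $|\partial_\xi^\alpha a|\leq C_\alpha$ on the compact support of $\Phi_0^1\Phi_0^2$ (from $a\in L^\infty BS^m_\rho$) together with the $L^\infty\Phi^2$ control $|\partial_\xi^\alpha\varphi_1|\lesssim|\xi|^{1-|\alpha|}$ for $|\alpha|\geq 2$, we obtain
$$|K(x,y)|\leq C_N (1+|\nabla_\xi\varphi_1(x_1,\xi_0)-y_1|)^{-N}(1+|\nabla_\eta\varphi_2(x_2,\eta_0)-y_2|)^{-N}$$
uniformly in $x$, for any $N$, where $\xi_0,\eta_0$ are reference points in the supports. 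The singularity $|\xi|^{1-|\alpha|}$ near the origin is integrable on the compact support for $|\alpha|\leq n$; reaching arbitrary $N$ requires dyadically decomposing the integration around the origin and treating the innermost piece by the trivial bound.

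To convert the frequency-side decay into spatial decay, we invoke the rough non-degeneracy condition $\eqref{RND}$ applied to each $\varphi_j$: the maps $x_j\mapsto\nabla_\xi\varphi_j(x_j,\xi_0)$ are essentially bi-Lipschitz, so $|\nabla_\xi\varphi_j(x_j,\xi_0)-y_j|\gtrsim|x_j-g_j(y_j)|$ for some bi-Lipschitz $g_j$. The kernel is then dominated by a tensor-product Schwartz function of the shifted variables $(x_1-g_1(y_1),x_2-g_2(y_2))$. Choosing $N>n$ and using the bi-Lipschitz change of variables with bounded Jacobian, both $\sup_x\int|K(x,y)|\,dy$ and $\sup_y\int|K(x,y)|\,dx$ are finite, so Schur's test delivers the desired $L^p$ boundedness for all $p\in[1,\infty]$.

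The principal obstacle will be the careful management of the phase singularities at $\xi=0$ and $\eta=0$ during repeated integration by parts: each derivative of $e^{i\varphi_1}$ can worsen the singularity by a factor of $|\xi|^{-1}$, and the dyadic splitting near the origin must be carefully coordinated with the $|y_1|^{-1}$ gained at each integration by parts to yield $N$-th order decay uniformly in $x$ for arbitrary $N$. Translating frequency-side decay into spatial decay via the $\eqref{RND}$ condition is then routine.
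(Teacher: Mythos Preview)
Your overall architecture (kernel estimate plus Schur's test) matches the paper's, but there is a real gap in the kernel estimate: the claim that one can reach decay of \emph{arbitrary} order $N$ in $(1+|\nabla_\xi\varphi_1(x_1,\xi_0)-y_1|)$ is false in general. The transpose $(L_\xi)^t$ involves $\nabla_\xi^2\varphi_1$, which, by homogeneity, blows up like $|\xi|^{-1}$ near the origin; after $N$ iterations the worst terms carry a factor $|\xi|^{-N}$, and your dyadic-plus-trivial-bound scheme, when summed, yields at best $R^{-n}$ decay (optimize $\epsilon^n + R^{-N}\epsilon^{n-N}$ over $\epsilon$: the minimum is $\sim R^{-n}$ regardless of $N$), which is \emph{not} integrable in $\mathbb{R}^n$. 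So Schur's test would not close with the estimate you actually obtain.

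The paper handles exactly this obstacle in two steps. First (Lemma~\ref{le-a}) it linearizes each phase around a sphere point $\zeta_j$, writing $\varphi_j(x_j,\xi)=\psi_j(x_j,\xi)+\langle\nabla_\xi\varphi_j(x_j,\zeta_j),\xi\rangle$ with $\psi_j\in L^\infty\Phi^1$; the nonlinear piece $e^{i\psi_j}$ is then absorbed into the amplitude $b$. Second (Lemma~\ref{le-b}) it proves a dedicated kernel bound for such $b$: after $n$ integrations by parts one introduces a cutoff at scale $\epsilon$, performs one \emph{additional} integration by parts on the outer region (producing a $\log\epsilon$), and optimizes $\epsilon=R^{-1}$ to obtain $R^{-n-\mu}$ for any $\mu<1$. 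That extra logarithmic gain---which your dyadic sketch misses---is precisely what pushes the decay past the integrability threshold. A smaller point: RND gives only a \emph{lower} Lipschitz bound on $x_j\mapsto\nabla_\xi\varphi_j(x_j,\xi_0)$, not bi-Lipschitz surjectivity, so you cannot invoke an inverse map $g_j$; the paper instead uses the non-smooth change-of-variables Lemma~\ref{le-FS} to control $\sup_y\int|K|\,dx$.
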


\begin{theorem}\label{th-b}
The bi-parameter FIO operator T  defined as in Theorem $\ref{th-e}$
is bounded on $L^1$, provided $m^{(i)}<-\frac{n-1}2-n(1-\rho^{(i)}),i=1,2$.
\end{theorem}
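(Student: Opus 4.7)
The plan is to adapt the one-parameter Seeger--Sogge--Stein (SSS) argument of \cite{RS} used in the proof of Theorem \ref{th-FS} to the bi-parameter setting. Since the low-frequency piece $T_{00}$ is $L^1$-bounded by Theorem \ref{th-local}, we focus on the high-frequency pieces $T^{\mu,\nu}_{jk}$ arising from the decomposition described in Section 3. The overall goal is to derive a pointwise kernel bound for each $T^{\mu,\nu}_{jk}$ that tensorizes into factors depending on $(x_1,y_1)$ and $(x_2,y_2)$ separately, and then to apply the one-parameter $L^1$ argument in each parameter.

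Concretely, after the bi-parameter Littlewood--Paley decomposition $a=\sum_{j,k\geq 0}\Phi_j^1(\xi)\Phi_k^2(\eta)\,a(x,\xi,\eta)$ and the SSS-type angular decomposition in each frequency variable (introducing grids $\{\xi_j^\mu\}\subset S^{n-1}$ with spacing $\sim 2^{-j/2}$ and $\{\eta_k^\nu\}$ with spacing $\sim 2^{-k/2}$, together with smooth conical cutoffs $\chi_j^\mu,\chi_k^\nu$), the kernel of $T^{\mu,\nu}_{jk}$ reads
$$K^{\mu,\nu}_{jk}(x,y)=\int_{\n\times\n}e^{i[\varphi_1(x_1,\xi)-y_1\cdot\xi]}e^{i[\varphi_2(x_2,\eta)-y_2\cdot\eta]}a^{\mu,\nu}_{jk}(x,\xi,\eta)\,d\xi\,d\eta.$$
Because the phase splits as $\varphi_1(x_1,\xi)+\varphi_2(x_2,\eta)$ and $a\in L^\infty BS^m_\rho$ admits joint mixed-derivative symbol estimates, one can insert the usual SSS radial/tangential integration-by-parts operators in $\xi$ and in $\eta$ independently. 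This should yield a product bound
$$|K^{\mu,\nu}_{jk}(x,y)|\ \lesssim\ H^{(1),\mu}_j(x_1,y_1)\cdot H^{(2),\nu}_k(x_2,y_2),$$
where each factor is precisely the one-parameter SSS kernel envelope associated with $\varphi_i$ and marginal symbol order $m^{(i)}$.

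Once the product estimate is in place, the $L^1\to L^1$ bound follows from $\|T^{\mu,\nu}_{jk}f\|_{L^1}\leq \|f\|_{L^1}\,\sup_{y_1,y_2}\int|K^{\mu,\nu}_{jk}|\,dx_1\,dx_2$, since the iterated integral in $(x_1,x_2)$ then decouples into a product of one-parameter integrals, each of which is summed over $(j,\mu)$ and $(k,\nu)$ respectively using the one-parameter $L^1$ argument of \cite{RS} under the threshold $m^{(i)}<-\frac{n-1}{2}-n(1-\rho^{(i)})$ -- exactly the $p=1$ case of Theorem \ref{th-FS} applied in each parameter. The main obstacle I anticipate is justifying the tensorization step: since $a\in L^\infty BS^m_\rho$ is not a product symbol, one must check carefully that the radial/tangential vector fields used for integration by parts in $\xi$ commute harmlessly with those used in $\eta$, and that the SSS angular cutoffs in one parameter do not interfere with those in the other -- both facts relying crucially on the additive separation of $\varphi_1$ and $\varphi_2$. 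A secondary technicality will be the change of variables $x_1\mapsto \nabla_\xi\varphi_1(x_1,\xi_j^\mu)$ (and similarly in $x_2$), which under \eqref{RND} is merely bilipschitz almost everywhere as in \cite{RS}, but this still suffices to pass from the pointwise kernel estimate to the uniform $L^1$ integration in the spatial variables.
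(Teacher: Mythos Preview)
Your approach is essentially the same as the paper's: you use Theorem~\ref{th-local} for $T_{00}$, the bi-parameter SSS decomposition of Section~\ref{sec-decomp} for the high-frequency pieces, the product operator $L^N=(1-\partial^2_{\xi_1}-2^{-j}\partial^2_{\xi'})^{N}(1-\partial^2_{\eta_1}-2^{-k}\partial^2_{\eta'})^{N}$ for integration by parts (your ``radial/tangential vector fields in $\xi$ and in $\eta$ independently''), Lemma~\ref{le-kernel} to control $L^N b^{\mu,\nu}_{jk}$, and the rough change of variables from Corollary~\ref{changev} to handle $\sup_y\int|K^{\mu,\nu}_{jk}|\,dx$. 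The tensorization you worry about is exactly what Lemma~\ref{le-kernel} delivers: the mixed-derivative symbol bounds in Definition~\ref{nonsmooth-biparameter-amplitude} are themselves of product type, and the phase splits additively, so $|K^{\mu,\nu}_{jk}(x,y)|$ is bounded by a constant times $\big(1+g_j(\nabla_\xi\varphi_1(x_1,\xi^\mu)-y_1)\big)^{-M/2}\big(1+g_k(\nabla_\eta\varphi_2(x_2,\eta^\nu)-y_2)\big)^{-M/2}$ for any $M>0$ (via interpolation between integer $N$'s), which is your $H^{(1),\mu}_j\cdot H^{(2),\nu}_k$.

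The one genuine omission is the mixed pieces $\sum_{k\ge1}T_{0k}$ and $\sum_{j\ge1}T_{j0}$, which are neither covered by Theorem~\ref{th-local} nor by your $T^{\mu,\nu}_{jk}$ analysis. The paper treats these separately: for $T_{0k}$ one performs the SSS angular decomposition only in $\eta$ (giving $T^\nu_{0k}$), writes the $\xi$-phase via Lemma~\ref{le-a} as $\psi_1(x_1,\xi)+\langle\nabla_\xi\varphi_1(x_1,\zeta_1),\xi\rangle$, and combines the low-frequency kernel estimate of Lemma~\ref{le-b} in the $\xi$-variable with the SSS integration by parts $J^N=(1-\partial^2_{\eta_1}-2^{-k}\partial^2_{\eta'})^N$ in the $\eta$-variable. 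This hybrid argument is straightforward once you see it, but it does not reduce to either the $T_{00}$ case or the $T^{\mu,\nu}_{jk}$ case, so you should include it explicitly.
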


\begin{theorem}
\label{th-c}
The bi-parameter FIO operator T  defined as in Theorem $\ref{th-e}$
is bounded on $L^2$, provided $m^{(i)}<\frac n2(\rho^{(i)}-1)-{n-1\over 4},i=1,2$.
\end{theorem}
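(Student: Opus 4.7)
The plan is to adapt the Seeger--Sogge--Stein scheme used in the proof of Theorem \ref{th-FS} for $p=2$ in \cite{RS} to the bi-parameter setting, exploiting the additive separability of the phase $\varphi=\varphi_1(x_1,\xi)+\varphi_2(x_2,\eta)$ so that the one-parameter techniques can be applied in each frequency direction independently, and then combined by a Cotlar--Stein argument acting on both angular parameter families simultaneously.

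I would begin by using the bi-parameter Seeger--Sogge--Stein decomposition sketched in Section 3 to write
$$T=T_{00}+\sum_{(j,k):\,j+k\geq 1}\sum_{\mu,\nu}T^{\mu,\nu}_{j,k},$$
where $j\geq 1$ indexes the dyadic scale $|\xi|\sim 2^j$ and $k\geq 1$ indexes $|\eta|\sim 2^k$, while $\mu$ ranges over $\sim 2^{j(n-1)/2}$ spherical caps of aperture $2^{-j/2}$ on $\mathbb{S}^{n-1}$ for the direction of $\xi$, and $\nu$ does the same for $\eta$. The low-frequency piece $T_{00}$ is disposed of by Theorem \ref{th-local}.

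For each fixed pair $(j,k)$ I would bound $\sum_{\mu,\nu}T^{\mu,\nu}_{j,k}$ by Cotlar--Stein applied to the full four-index family. Because the phase splits as $\varphi_1+\varphi_2$, the kernel of $(T^{\mu,\nu}_{j,k})^{*}T^{\mu',\nu'}_{j,k}$ is an oscillatory integral whose $\xi$-integration involves only $\varphi_1$ and whose $\eta$-integration involves only $\varphi_2$. Integration by parts in $\xi$ against $\nabla_\xi(\varphi_1(x_1,\xi)-\varphi_1(y_1,\xi))$, combined with the rough non-degeneracy \eqref{RND} of $\varphi_1$ applied on disjoint caps, yields a decay factor $(1+2^{j/2}|\theta_\mu-\theta_{\mu'}|)^{-N}$, and an independent integration by parts in $\eta$ produces the analogous $(1+2^{k/2}|\theta_\nu-\theta_{\nu'}|)^{-N}$. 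Together with the pointwise bounds coming from $a\in L^\infty BS^m_\rho$, these furnish the almost-orthogonality needed for Cotlar--Stein, and the lemma gives
$$\Bigl\|\sum_{\mu,\nu}T^{\mu,\nu}_{j,k}\Bigr\|_{L^2\to L^2}\lesssim 2^{j\bigl[m^{(1)}-\frac{n(\rho^{(1)}-1)}{2}+\frac{n-1}{4}\bigr]}\cdot 2^{k\bigl[m^{(2)}-\frac{n(\rho^{(2)}-1)}{2}+\frac{n-1}{4}\bigr]}.$$

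The hypothesis $m^{(i)}<\tfrac{n}{2}(\rho^{(i)}-1)-\tfrac{n-1}{4}$ renders both exponents strictly negative, so the triangle inequality over $(j,k)$ sums the above geometric bound and delivers the full $L^2$ estimate for $T$. The main obstacle, as in the rough one-parameter case \cite{RS}, is that $\varphi_i$ and $a$ are only $L^\infty$ in the spatial variables, which forbids any integration by parts in $x$; all the oscillation exploited above must be extracted purely from the $\xi$ and $\eta$ variables, using only $L^\infty$ bounds on the $x$-dependence of the phase and amplitude. The new bi-parameter difficulty is to arrange the Cotlar--Stein almost-orthogonality so that the $\xi$- and $\eta$-integrations decouple cleanly and produce the product-type decay in $(\mu,\mu')$ and $(\nu,\nu')$ displayed above — this decoupling is exactly where the additive structure \eqref{phasedef} of the phase is essential, and where the non-separable nature of the amplitude $a(x,\xi,\eta)\in L^\infty BS^m_\rho$ has to be handled with particular care.
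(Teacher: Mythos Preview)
Your overall architecture --- the bi-parameter Seeger--Sogge--Stein decomposition of Section~\ref{sec-decomp}, disposing of $T_{00}$ via Theorem~\ref{th-local}, then summing geometric $L^2$ bounds over $(j,k)$ --- matches the paper. The divergence, and the gap, is in how you handle $\sum_{\mu,\nu}T^{\mu,\nu}_{jk}$ for fixed $(j,k)$.

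The paper does \emph{not} use Cotlar--Stein across the angular indices. It sets $S^{\mu,\nu}_{jk}(\hat f):=T^{\mu,\nu}_{jk}(f)$ and estimates each piece individually via $\|S^{\mu,\nu}_{jk}\|^2=\|S^{\mu,\nu}_{jk}(S^{\mu,\nu}_{jk})^*\|$: the kernel $R^{\mu,\nu}_{jk}(x,y)$ of $S(S)^*$ carries the phase $\varphi_1(x_1,\xi)-\varphi_1(y_1,\xi)+\varphi_2(x_2,\eta)-\varphi_2(y_2,\eta)$, and integration by parts in $(\xi,\eta)$ with the operator $L=(1-\partial^2_{\xi_1}-2^{-j}\partial^2_{\xi'})(1-\partial^2_{\eta_1}-2^{-k}\partial^2_{\eta'})$, together with the rough non-degeneracy \eqref{RND} and the non-smooth change of variables (Lemma~\ref{le-FS}), controls $\sup_x\int|R|\,dy$ and $\sup_y\int|R|\,dx$. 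The pieces are then simply summed over $\mu,\nu$.

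Your Cotlar--Stein proposal has a genuine gap. The phase $\varphi_1(x_1,\xi)-\varphi_1(y_1,\xi)$ you invoke is exactly what appears in the \emph{diagonal} kernel of $S^{\mu,\nu}_{jk}(S^{\mu,\nu}_{jk})^*$; it does not arise in the off-diagonal composition $(T^{\mu,\nu}_{jk})^{*}T^{\mu',\nu'}_{jk}$, whose kernel is $\int \overline{K^{\mu,\nu}(x,z)}\,K^{\mu',\nu'}(x,z')\,dx$ with \emph{separate} frequency variables $\xi\in\Gamma^\mu_j$ and $\xi'\in\Gamma^{\mu'}_j$. Integration by parts in $\xi$ there produces decay only in $|\nabla_\xi\varphi_1(x_1,\xi)-z_1|$, which carries no information about $\mu'$. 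Obtaining the cross-cap decay $(1+2^{j/2}|\xi^\mu-\xi^{\mu'}|)^{-N}$ for this composition would require exploiting oscillation in the $x$-integral, i.e., integration by parts in $x_1$ against $\nabla_{x_1}\bigl(\varphi_1(x_1,\xi')-\varphi_1(x_1,\xi)\bigr)$ --- and precisely this is forbidden by the rough hypothesis $\varphi_1\in L^\infty\Phi^2$. On the other side, $S^{\mu,\nu}_{jk}(S^{\mu',\nu'}_{jk})^*$ vanishes for non-adjacent caps by frequency-support disjointness, but that trivial half of the Cotlar--Stein hypothesis does not supply the missing one. The paper's per-piece $SS^*$ argument sidesteps this obstruction entirely: no cross-cap estimate is ever needed.
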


\begin{theorem}\label{th-d}
The bi-parameter FIO operator T  defined as in Theorem $\ref{th-e}$
is bounded on $L^\infty$, provided $m^{(i)}<-\frac{n-1}2-\frac n2(1-\rho^{(i)}),i=1,2$.
\end{theorem}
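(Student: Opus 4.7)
The plan is to adapt the one-parameter strategy of Dos Santos Ferreira--Staubach (Theorem \ref{th-FS}) to the bi-parameter setting, exploiting the additive separation of the phase $\varphi(x,\xi,\eta)=\varphi_1(x_1,\xi)+\varphi_2(x_2,\eta)$. First, I would apply the bi-parameter Seeger--Sogge--Stein decomposition constructed in Section 3, writing
\[
T=T_{00}+\sum_{j,k\ge 1}\sum_{\mu,\nu}T^{\mu,\nu}_{j,k},
\]
where the amplitude of $T^{\mu,\nu}_{j,k}$ is localized to the product sector $\{|\xi|\sim 2^j,\ \xi/|\xi|\approx\xi^\mu_j\}\times\{|\eta|\sim 2^k,\ \eta/|\eta|\approx\eta^\nu_k\}$ with angular widths $2^{-j/2}$ and $2^{-k/2}$; there are $\sim 2^{j(n-1)/2}$ directions $\mu$ and $\sim 2^{k(n-1)/2}$ directions $\nu$. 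The low-frequency piece $T_{00}$ is handled by Theorem \ref{th-local}, so only the high-frequency sum requires new work.

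Second, for each high-frequency piece I would pass to the kernel
\[
K^{\mu,\nu}_{j,k}(x,y)=\int_{\nn} e^{i[\varphi_1(x_1,\xi)-y_1\cdot\xi]}e^{i[\varphi_2(x_2,\eta)-y_2\cdot\eta]}\,a^{\mu,\nu}_{j,k}(x,\xi,\eta)\,d\xi\,d\eta
\]
and estimate $\sup_x\|K^{\mu,\nu}_{j,k}(x,\cdot)\|_{L^1(\nn)}$, since this dominates $\|T^{\mu,\nu}_{j,k}\|_{L^\infty\to L^\infty}$. Because $\varphi$ is additively separated and the bi-parameter class $L^\infty BS^{m}_{\rho}$ allows independent differentiation in $\xi$ and $\eta$, integration by parts can be performed in the two frequency variables separately. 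Using \eqref{RND} together with the angular localization, I expect to gain $2^{-j}$ per parallel derivative and $2^{-j/2}$ per transverse derivative in $\xi$, with analogous gains for $\eta$; iterating enough times and integrating the resulting Schwartz-type kernel in $y$ should yield
\[
\sup_{x}\|K^{\mu,\nu}_{j,k}(x,\cdot)\|_{L^1(\nn)}\lesssim 2^{j\mi+j(n-1)(1-\rhi)/2}\cdot 2^{k\mii+k(n-1)(1-\rhii)/2}.
\]

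Third, I would sum this kernel bound over the $\sim 2^{j(n-1)/2}\cdot 2^{k(n-1)/2}$ tiles to obtain
\[
\sum_{j,k\ge 1}2^{j[\mi+(n-1)/2+n(1-\rhi)/2]}\cdot 2^{k[\mii+(n-1)/2+n(1-\rhii)/2]},
\]
which converges precisely when $\mi<-(n-1)/2-n(1-\rhi)/2$ and $\mii<-(n-1)/2-n(1-\rhii)/2$, matching the hypothesis.

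The main obstacle is carrying out the kernel estimate of step two under the weak $L^\infty$ regularity of $a$ and $\varphi_i$ in $x$. Since the roughness is confined to the $x$-variable while all integrations by parts happen in $(\xi,\eta)$, the $L^\infty_x$ norm survives as a pointwise supremum; but one must verify that the many derivatives falling on both the angular cutoffs $\chi^\mu_j(\xi)\chi^\nu_k(\eta)$ (each costing $2^{j/2}$ or $2^{k/2}$ per transverse direction) and on the amplitude $a\in L^\infty BS^{m}_{\rho}$ combine with the $\rhi,\rhii$-losses exactly to produce the geometric series above, and that the splitting of \eqref{RND} into two one-parameter RND conditions is indeed enough to effect the $y$-localization in each frequency block separately. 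This reduces to a careful bi-parameter analogue of the one-parameter kernel estimate in Dos Santos Ferreira--Staubach, with the key gain coming from the tensor structure of both the phase and the separated $\xi,\eta$-derivatives of the bi-parameter symbol class.
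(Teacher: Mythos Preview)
Your outline has the right overall shape---reduce to kernel $L^1_y$-bounds for the pieces $T^{\mu,\nu}_{j,k}$ and sum---but the kernel step as you describe it would not reach the stated threshold. Pure integration by parts with the anisotropic operator $L^N=(1-\partial^2_{\xi_1}-2^{-j}\partial^2_{\xi'})^N(1-\partial^2_{\eta_1}-2^{-k}\partial^2_{\eta'})^N$, followed by integrating the resulting pointwise kernel bound in $y$, is exactly what the paper does for $L^1$ (Theorem \ref{th-b}); that argument yields
\[
\sup_x\|K^{\mu,\nu}_{j,k}(x,\cdot)\|_{L^1}\lesssim 2^{j(\mi+M(1-\rhi))}2^{k(\mii+M(1-\rhii))},\qquad M>n,
\]
because one needs $M>n$ to make $(1+g_j)^{-M/2}$ integrable in $y_1$. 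After summing over the $\sim 2^{j(n-1)/2}2^{k(n-1)/2}$ tiles this gives only $\mi<-\frac{n-1}{2}-n(1-\rhi)$, strictly worse than the required $\mi<-\frac{n-1}{2}-\frac{n}{2}(1-\rhi)$ whenever $\rhi<1$. (Note also that your intermediate kernel bound $2^{j\mi+j(n-1)(1-\rhi)/2}$ is not consistent with the exponent $\mi+\frac{n-1}{2}+\frac{n}{2}(1-\rhi)$ you write down after summation.)

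The paper gains the missing factor of $2$ in the $(1-\rho)$-loss by replacing the pointwise kernel estimate with an $L^2$/Plancherel argument. After translating $y$, one writes $\int|\widetilde{b^{\mu,\nu}_{jk}}(x,y)|\,dy$, splits $\mathbb{R}^n_{y_1}\times\mathbb{R}^n_{y_2}$ into the four regions determined by $g_j(y_1)\lessgtr 2^{-2j\rhi}$ and $g_k(y_2)\lessgtr 2^{-2k\rhii}$, and on each region applies Cauchy--Schwarz together with Plancherel for $\|L^l b^{\mu,\nu}_{jk}\|_{L^2(d\xi d\eta)}$. The weighted norm $\big(\int|\widetilde{b}|^2(1+g_j)^{2l}(1+g_k)^{2l}dy\big)^{1/2}$ is controlled by $\|L^l b\|_{L^2}$, and the complementary weight is integrable once $l>\frac{n}{4}$; thus only $2l>\frac{n}{2}$ ``derivatives'' are needed rather than $n$, producing the loss $\frac{n}{2}(1-\rhi)$ instead of $n(1-\rhi)$. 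This $L^2$-based step is the essential idea your proposal is missing. As a smaller point, the bi-parameter decomposition also contains the mixed pieces $\sum_j T_{j0}$ and $\sum_k T_{0k}$, which require a hybrid of the above argument in one parameter and the low-frequency kernel estimate of Lemma \ref{le-b} in the other.
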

\vskip0.3cm
The paper is organized as follows:

In Section \ref{def-th}, we give some  definitions and preliminaries  that will be used in the sequel.

In Section \ref{sec-decomp}, we will recall the Littlewood-Paley decomposition and the Seeger-Sogge-Stein decomposition with some useful facts. The bi-parameter FIOs will then be decomposed in each parameter by using such decompositions.

In Section \ref{sec-thpq}, we give the proof of Theorem $\ref{th-a}$, where  the amplitude function $a(x,\xi,\eta)\in L^p BS^m_\rho$ and the phase functions $\varphi_1,\varphi_2\in \Phi^2$.

In Section \ref{th-lp}, we include the proof of the $L^p$ estimate of the bi-parameter FIOs with non-smooth phases $\varphi_1,\varphi_2\in L^\infty \Phi^2$ and amplitudes $a(x,\xi,\eta)\in L^\infty BS^m_\rho $, namely Theorem 1.4. To do this, we will divide its proof into several steps by first establishing the $L^p$ estimates of $T_{00}$ (Theorem 1.5) and then the $L^1$, $L^2$ and $L^{\infty}$ estimates for the FIO $T$.
(Theorems 1.6, 1.7 and 1.8).

\setcounter{equation}{0}
\section{\bf Some    definitions and preliminaries}
\label{def-th}

We begin with the following notations and definitions that will be needed in this paper.

\begin{definition}\label{norm}
  \begin{itemize}
 \item[$(i)$] For $\xi \in \n$, we define $\langle \xi \rangle:=(1+|\xi|^2)^{1\over 2}.$
 \item[$(ii)$] For $\xi \in \n$, we denote the annulus $\{\xi:1\leq |\xi|\leq 2\} $ in $\n$ by $A_\xi$.
\end{itemize}
\end{definition}

\begin{definition}\label{smooth-biparameter-amplitude}
A smooth function $a(x,\xi,\eta)$, where $ x=(x_1,x_2)\in \mathbb{R}^n\times \mathbb{R}^n,\xi,\eta\in\mathbb{R}^n$, is said to be in the class  $BS^m_{\rho,\delta}$ of bi-parameter amplitudes (we also say that $a$ is  a symbol of order $m$) for real pairs $m=(\mi,\mii), \rho=(\rhi,\rhii),\delta=(\delta^{(1)},\delta^{(2)})\in \mathbb{R}^2$ where $0\leq \rhi,\rhii,\delta^{(1)},\delta^{(2)}\leq 1$, if for all multi-indices $\alpha=(\alpha_1,\alpha_2),\beta=(\beta_1,\beta_2)$, there holds
$$\sup\limits_{x, \xi,\eta\in\mathbb{R}^n}(1+|\xi|)^{-m^{(1)}-\delta^{(1)}|\alpha_1|+\rho^{(1)}|\beta_1|}
(1+|\eta|)^{-m^{(2)}-\delta^{(2)}|\alpha_2|+\rho^{(2)}|\beta_2|}\big|\partial_{x_1}^{\alpha_1}
\partial_{x_2}^{\alpha_2}\partial_\xi^{\beta_1}\partial_\eta^{\beta_2}a(x,\xi,\eta)\big|<\infty.$$
\end{definition}

\begin{definition}\label{nonsmooth-biparameter-amplitude}
Let $1\leq p\leq\infty$, $m=(\mi,\mii)\in \mathbb{R}^2, \rho=(\rhi,\rhii) $ be real pairs  where $0\leq \rhi,\rhii\leq 1$. A function  $a(x,\xi,\eta), x=(x_1,x_2)\in \nn$, $\xi,\eta\in \n$ is said to belong to $L^pBS^m_\rho$, if $a(x,\xi,\eta)$ is measurable in $x$,  $a(x,\xi,\eta)\in C^\infty(\mathbb{R}_\xi^n\times\mathbb{R}_\eta^n)$ for a.e. $x\in \nn$, and for all multi-indices $\alpha=(\alpha_1,\alpha_2)$ there exists $C_\alpha>0$ such that
$$\big\|\partial_\xi^{\alpha_1}\partial_\eta^{\alpha_2}a(\cdot,\xi,\eta)\big\|_{L^p(\mathbb{R}^n\times\mathbb{R}^n)}\leq C_\alpha(1+|\xi|)^{m^{(1)}-\rho^{(1)}|\alpha_1|}
(1+|\eta|)^{m^{(2)}-\rho^{(2)}|\alpha_2|}$$
Here, for $s\in \mathbb{N}$ we define the associated semi-norm $$|a|_{p,m,s}=\sum\limits_{\gfz{\alpha=(\alpha_1,\alpha_2)}{|\alpha|\leq s}}\sup\limits_{\xi,\eta\in\mathbb{R}^n}(1+|\xi|)^{\rho^{(1)}|\alpha_1|-m^{(1)}}
(1+|\eta|)^{\rho^{(2)}|\alpha_2|-m^{(2)}}\big\|\partial_\xi^{\alpha_1}\partial_\eta^{\alpha_2}a(\cdot,\xi,\eta)\big\|_{L^p}$$
\end{definition}

\begin{definition}\label{homogeneity}[Euler's Theorem]
A real valued function $\varphi(x,\xi)\in C^\infty(\mathbb{R}^n\times\mathbb{R}^n\setminus \{0\})$ is said to be positively homogeneous of degree 1 in $\xi$, if for all $\lambda>0$, there holds$$\varphi(x,\lambda \xi)=\lambda \varphi(x,\xi). $$
Moreover, $\varphi(x,\xi)$ is positively homogeneous of degree 1 if and only if
$$ \varphi(x,\xi)=\xi\cdot\nabla_\xi \varphi(x,\xi).$$
\end{definition}

\begin{definition}\label{PhiK-phasefunction}
  A real valued function $\varphi(x,\xi)$ is said to belong to the class $\Phi^k$, if $\varphi(x,\xi)\in  C^\infty(\n \times \n\setminus \{0\})$, is positively homogeneous of degree $1$ in the frequency variable $\xi$, and for all multi-indices $\alpha$ and $\beta$ satisfying $|\alpha|+|\beta|\geq k$, there exists a positive constant $C_{\alpha,\beta}$ such that
$$\sup\limits_{(x,\xi)\in\n \times \mathbb{R}^n\setminus\{0\}}|\xi|^{-1+|\alpha|}|\partial_x^{\beta}\partial_\xi^{\alpha}\varphi(x,\xi)|\leq C_{\alpha,\beta}.$$
\end{definition}

\begin{definition}\label{Strong-Nondegeneracy}A real valued function $\varphi\in C^2(\mathbb{R}^{n}\times\mathbb{R}^{n}\setminus\{0\})$ is said to satisfy the strong non-degeneracy condition, if there exists a positive $c>0$, such that
\begin{eqnarray}\label{SND}
\bigg|\det \frac{\partial^2\varphi(x,\xi)}{\partial x_j\partial \xi_k}\bigg|\geq c
\end{eqnarray}
for all $(x,\xi)\in \mathbb{R}^{n}\times\mathbb{R}^{n}\setminus\{0\}$.
\end{definition}

\begin{definition}\label{LP-PhiK-phasefunction}
Let $1\leq p \leq \infty$. A real valued function $ \varphi(x,\xi)$ is said to belong to the class $L^p\Phi^k$, if $ \varphi$ is positively homogeneous of degree $1$, smooth on $\xi\in\n \setminus \{0\}$ ,  measurable in $x$, and for all multi-indices $\alpha$ with $|\alpha|\geq k$ there holds
$$\sup\limits_{\xi \in \mathbb{R}^n\setminus\{0\}}|\xi|^{-1+|\alpha|}\big\|\partial_\xi^{\alpha}\varphi(x,\xi)\big\|_{L^p(\mathbb{R}^{n})}\leq \infty.$$
\end{definition}

\begin{definition}\label{Nonsmooth-Nondegeneracy}
A real valued function $\varphi$ is said to satisfy  the rough non-degeneracy condition, if it is $C^1(\mathbb{R}_\xi^n)$,  bounded measurable in $x$, and there exists $C>0$, such that for any $x,y\in\n$ and $\xi\in \n\setminus\{0\}$,
\begin{eqnarray}\label{RND}
\big|(\nabla_{\xi}\varphi)(x,\xi)-(\nabla_{\xi}\varphi)(y,\xi)\big|\geq C\cdot|x-y|,
\end{eqnarray}
where $\nabla_{\xi}\varphi=(\partial_{\xi_1}\varphi,\partial_{\xi_2}\varphi,...,\partial_{\xi_n}\varphi)$.
\end{definition}
\vskip0.2cm

 Next, we will give some lemmas needed to prove our main theorems.
 We begin with the following
  bi-parameter version of the one-parameter result established in \cite{FS}.

  \begin{Lemma}
\label{le-a}
The bi-parameter FIO \,$T(f)$ defined as in $\eqref{opdef}$
with amplitude $a(x,\xi,\eta)\in L^\infty BS_{\rho,\delta}^m$ and phase function defined as in $\eqref{phasedef}$ can be written as a finite sum of operators of the form
\begin{eqnarray*}
\int_{\mathbb{R}^{2n}}e^{i(\psi_1(x_1,\xi)+\langle\nabla_{\xi}\varphi_1(x_1,\zeta_1),\xi\rangle)}\cdot e^{i(\psi_2(x_2,\eta)+\langle\nabla_{\eta}\varphi_2(x_2,\zeta_2),\eta\rangle)}\cdot a(x,\xi,\eta)\cdot\widehat{f}(\xi,\eta)d\xi d\eta
\end{eqnarray*}
where $\zeta_1,\zeta_2$ are points on the unit sphere $S^{n-1},\ \psi_1,\psi_2\in L^\infty \Phi^1$, and $a\in L^\infty BS_{\rho,\delta}^m$ is localized in the $\xi$ variable around the point $\zeta_1$, $\eta$ variable around the point $\zeta_2$.
\end{Lemma}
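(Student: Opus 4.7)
The plan is to first decompose the amplitude conically in each frequency variable via a finite smooth partition of unity on the spheres $S^{n-1}_\xi$ and $S^{n-1}_\eta$, and then on each piece to subtract from each one-parameter phase its tangent linear term at the chosen reference direction, leaving a remainder $\psi_j$ that must be shown to belong to $L^\infty\Phi^1$. Since the total phase in $\eqref{phasedef}$ already separates additively in the two parameters, the algebra in $\xi$ and in $\eta$ decouples, and the bi-parameter statement reduces to applying the one-parameter reduction once in each frequency variable.

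Concretely, I would pick a finite open covering $\{U_i^{(1)}\}$ of $S^{n-1}$ by caps of small angular radius centered at points $\zeta_i^{(1)}$ together with a subordinate smooth partition of unity $\{\chi_i^{(1)}\}$, and analogously $\{U_j^{(2)}\},\{\chi_j^{(2)}\},\{\zeta_j^{(2)}\}$ for the $\eta$-sphere. After extending $\chi_i^{(1)},\chi_j^{(2)}$ to be homogeneous of degree $0$ on $\n\setminus\{0\}$ and adding compactly supported bumps near the origin, whose contribution falls under the analysis of Theorem \ref{th-local}, I write $a=\sum_{i,j}a_{ij}$ with $a_{ij}(x,\xi,\eta)=a(x,\xi,\eta)\,\chi_i^{(1)}(\xi)\,\chi_j^{(2)}(\eta)$. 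The angular cutoffs are degree-zero homogeneous symbols, so each $a_{ij}$ remains in $L^\infty BS^m_{\rho,\delta}$ and is localized in a narrow product cone about $(\zeta_i^{(1)},\zeta_j^{(2)})$. Setting $\zeta_1=\zeta_i^{(1)}$, $\zeta_2=\zeta_j^{(2)}$ and defining
\begin{align*}
\psi_1(x_1,\xi)&:=\varphi_1(x_1,\xi)-\langle\nabla_\xi\varphi_1(x_1,\zeta_1),\xi\rangle,\\
\psi_2(x_2,\eta)&:=\varphi_2(x_2,\eta)-\langle\nabla_\eta\varphi_2(x_2,\zeta_2),\eta\rangle,
\end{align*}
the phase of the $(i,j)$ piece of $T(f)(x)$ takes exactly the target form, and the finite sum over $(i,j)$ recovers $T$.

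The main technical step is verifying $\psi_1,\psi_2\in L^\infty\Phi^1$. Both are smooth off the origin in frequency and positively homogeneous of degree $1$ as sums of two such functions, and both are bounded measurable in $x$. For any multi-index $|\alpha|\geq 2$ the linear term is annihilated, so $\partial_\xi^\alpha\psi_1=\partial_\xi^\alpha\varphi_1$ and the required bound $\sup_{\xi\neq 0}|\xi|^{-1+|\alpha|}\|\partial_\xi^\alpha\psi_1(\cdot,\xi)\|_{L^\infty}<\infty$ is inherited from $\varphi_1\in L^\infty\Phi^2$. The delicate case is $|\alpha|=1$: because $\partial_{\xi_k}\varphi_1(x_1,\cdot)$ is homogeneous of degree $0$ in $\xi$, it descends to a function on $S^{n-1}$, and the second-derivative bound $|\xi|\,|\partial_\xi^\beta\varphi_1|\leq C$ for $|\beta|=2$ furnishes a uniform Lipschitz estimate for this function on the sphere. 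The mean value theorem along an arc from $\zeta_1$ to $\xi/|\xi|$ then yields
\begin{align*}
|\partial_{\xi_k}\psi_1(x_1,\xi)|=\big|\partial_{\xi_k}\varphi_1(x_1,\xi/|\xi|)-\partial_{\xi_k}\varphi_1(x_1,\zeta_1)\big|\leq C\,\big|\xi/|\xi|-\zeta_1\big|\leq 2C,
\end{align*}
uniformly in $x_1$ and $\xi\neq 0$. This $|\alpha|=1$ estimate is the main obstacle, because $L^\infty\Phi^2$ imposes no direct control on first $\xi$-derivatives of $\varphi_j$; it is recovered only after subtracting the constant vector $\nabla_\xi\varphi_1(x_1,\zeta_1)$ and exploiting compactness of $S^{n-1}$ together with the second-derivative bound. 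The argument for $\psi_2$ is identical, and this completes the verification that each piece of the decomposition has the form asserted in the lemma.
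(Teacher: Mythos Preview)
The paper does not actually prove this lemma; it simply states it as the bi-parameter analogue of the one-parameter result in \cite{FS} and moves on. Your argument---finite conical partition of unity on each sphere, subtraction of the linear term $\langle\nabla_\xi\varphi_j(x_j,\zeta_j),\cdot\rangle$, and verification that the remainder lies in $L^\infty\Phi^1$ using degree-$0$ homogeneity of $\nabla_\xi\varphi_j$ together with the $\Phi^2$ second-derivative bound---is exactly the standard proof that \cite{FS} carries out in one parameter, applied independently in $\xi$ and $\eta$, and it is correct.

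Two small cosmetic points. First, the aside about ``compactly supported bumps near the origin, whose contribution falls under the analysis of Theorem~\ref{th-local}'' is out of place: the lemma is a purely structural decomposition and should not invoke a boundedness theorem; the homogeneous $\chi_i^{(1)},\chi_j^{(2)}$ already sum to $1$ on $\n\setminus\{0\}$, which is all that is needed. Second, strictly speaking a degree-$0$ homogeneous cutoff is not $C^\infty$ at the origin, so $a_{ij}=a\cdot\chi_i^{(1)}\chi_j^{(2)}$ just misses the smoothness requirement in Definition~\ref{nonsmooth-biparameter-amplitude} at $\xi=0$ or $\eta=0$; this is routinely handled (as in \cite{FS}) by smoothing the cutoffs inside a fixed ball, and in every application in the paper the resulting amplitude is paired with the factor $e^{i\psi_j}$ and fed into Lemma~\ref{le-b}, which is designed precisely for functions with this mild origin singularity. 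Neither point affects the substance of your argument.
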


Next we will establish the following bi-parameter kernel estimates.

\begin{Lemma} \label{le-b}
Let $b(x,\xi,\eta)$ be a bounded function, which is $C^{2n+2}(\mathbb{R}_\xi^n\setminus \{0\}\times\mathbb{R}_\eta^n\setminus\{0\})$ and compactly supported in $\xi,\eta$. If for any $|\alpha_1|\leq n+1, |\alpha_2|\leq n+1$
$$\sup\limits_{\xi,\eta\in\mathbb{R}^n\setminus\{0\}}h_{\alpha_1}(\xi) h_{\alpha_2}(\eta)
\big\|\partial_\xi^{\alpha_1}\partial_\eta^{\alpha_2}b(\cdot,\xi,\eta)\big\|_{L^\infty}<\infty,$$
where $h_{\gamma}(z)$ is defined to be $1$ when $|\gamma|=0$ and $|z|^{-1+|\gamma|}$ otherwise.
Then, for all $0\leq\mu<1$, we have
$$\sup\limits_{\gfz{x\in\mathbb{R}^n\times\mathbb{R}^n}{u,v\in\mathbb{R}^n}}(1+|u|)^{n+\mu}\cdot(1+|v|)^{n+\mu}\bigg|\int e^{-i(u\cdot\xi+v\cdot\eta)}b(x,\xi,\eta)d\xi d\eta\bigg|<\infty$$
\end{Lemma}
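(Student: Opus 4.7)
The plan is to integrate by parts $n$ times in each of $\xi$ and $\eta$, and then estimate the residual double oscillatory integral by splitting each variable at scale $|u|^{-1}$, $|v|^{-1}$ and performing one further integration by parts on the outer pieces. The borderline failure of $\partial_\xi^{n+1}b\sim|\xi|^{-n}$ to be integrable near the origin is what forces the logarithmic loss absorbed by the strict inequality $\mu<1$.

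The hypothesis gives $|\partial_\xi^{\alpha_1}\partial_\eta^{\alpha_2}b(x,\xi,\eta)|\lesssim|\xi|^{1-|\alpha_1|}|\eta|^{1-|\alpha_2|}$ for $|\alpha_i|\geq 1$ (with the natural modification when some $\alpha_i$ vanishes), uniformly in $x$. Since $|\xi|^{1-n}|\eta|^{1-n}$ is locally integrable on $\n\times\n$, integration by parts $n$ times in each variable is legitimate and yields
$$\int e^{-i(u\cdot\xi+v\cdot\eta)}b(x,\xi,\eta)\,d\xi\,d\eta=\frac{1}{(iu)^{n}(iv)^{n}}\int e^{-i(u\cdot\xi+v\cdot\eta)}\partial_\xi^{n}\partial_\eta^{n}b(x,\xi,\eta)\,d\xi\,d\eta.$$
Setting $r=|u|^{-1}$ and $s=|v|^{-1}$, I would split the residual into four regions delimited by these thresholds. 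The inner region $\{|\xi|\leq r,\,|\eta|\leq s\}$ is estimated trivially by $\int_{|\xi|\leq r}|\xi|^{1-n}d\xi\cdot\int_{|\eta|\leq s}|\eta|^{1-n}d\eta\lesssim rs$. In each mixed region, inserting a smooth cutoff at the outer scale and integrating by parts once more in the outer variable produces a $\log(1/r)$ or $\log(1/s)$ factor coming from the $L^{1}$ norm of $|\xi|^{-n}$ (respectively $|\eta|^{-n}$) over the outer annulus, plus $O(1)$ boundary terms from the cutoff; the full outer region $\{|\xi|\geq r,\,|\eta|\geq s\}$ uses cutoffs in both variables and one further integration by parts in each, producing both logarithms. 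Collecting contributions gives a residual bound
$$\lesssim\,|u|^{-1}|v|^{-1}\log(1+|u|)\log(1+|v|).$$

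Multiplying by the prefactor $|u|^{-n}|v|^{-n}$ yields $|\widehat b(x,u,v)|\lesssim(1+|u|)^{-(n+1)}(1+|v|)^{-(n+1)}\log(1+|u|)\log(1+|v|)$ uniformly in $x$, with the trivial boundedness of $b$ covering the cases $|u|\leq 1$ or $|v|\leq 1$. The logarithmic factors are dominated by the slacks $(1+|u|)^{1-\mu}$ and $(1+|v|)^{1-\mu}$ for any $\mu<1$, producing the claimed uniform bound $(1+|u|)^{-(n+\mu)}(1+|v|)^{-(n+\mu)}$ after taking the supremum over $x$. The principal obstacle is the borderline logarithmic non-integrability of $\partial_\xi^{n+1}b\sim|\xi|^{-n}$ at the origin, which prevents a full $n+1$ integrations by parts globally and is precisely the reason $\mu=1$ must be excluded; the careful cutoff-at-scale-$|u|^{-1}$ bookkeeping together with a single additional IBP is the technical heart of the argument.
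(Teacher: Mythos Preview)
Your proposal is correct and follows essentially the same route as the paper: integrate by parts $n$ times in each of $\xi$ and $\eta$, introduce smooth cutoffs at scales $\epsilon_1=|u|^{-1}$ and $\epsilon_2=|v|^{-1}$, perform one further integration by parts on the outer pieces, collect the resulting $\log|u|\cdot\log|v|$ factors, and absorb them into the slack $1-\mu>0$. The paper's organization is the same four-piece decomposition $I_1,\dots,I_4$ that you describe. One notational point: since $u,\xi\in\mathbb{R}^n$, the expression $(iu)^{-n}\partial_\xi^{n}$ should be read as the directional operator $|u|^{-2n}\langle u,D_\xi\rangle^{n}$ (as the paper writes it), which after bounding $|\langle u,D_\xi\rangle^{n}b|\le |u|^{n}\sum_{|\alpha|=n}|\partial_\xi^\alpha b|$ gives precisely the $|u|^{-n}$ gain you use.
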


\begin{proof}
The desired estimate follows easily when $|u|,|v|\leq 1$, so we consider $|u|,|v|\geq 1$, and the cases $|u|\leq 1,|v|\geq 1$ and $|v|\leq 1,|u|\geq 1$ will follow similarly.
 Assume $b(x, \xi, \eta)$ is supported in $|\xi|\leq M$ and $|\eta|\leq M$ for some $M>0$. Let $$B(x,u,v)=\int e^{-i(u\cdot\xi+v\cdot\eta)}b(x,\xi,\eta)d\xi d\eta,$$
 we have
\begin{eqnarray*}
|B(x,u,v)|&=& |u|^{-2n}|v|^{-2n}\bigg|\int e^{-i(u\cdot\xi+v\cdot\eta)}\langle u, D_\xi \rangle^n \langle  v, D_\eta \rangle^n b(x,\xi,\eta)d\xi d\eta\bigg|\\
&\leq & |u|^{-n}|v|^{-n} \bigg|\int_{|\xi|<M}\int_{|\eta|<M} \frac{1}{|\xi|^{n-1}} \frac{1}{|\eta|^{n-1}} d\xi d\eta\bigg|.
\end{eqnarray*}
 Note that the function $\beta(x,\xi,\eta)\doteq |u|^{-n}|v|^{-n} \langle u, D_\xi \rangle^n \langle  v, D_\eta \rangle^n b(x,\xi,\eta)\in C^\infty(\mathbb{R}_\xi^n\setminus \{0\}\times\mathbb{R}_\eta^n\setminus\{0\})$ satisfies

\begin{equation}\label{betacd}
\sup\limits_{\xi,\eta\in\mathbb{R}^n\setminus\{0\}}|\xi|^{n-1+|\alpha_1|}|\eta|^{n-1+|\alpha_2|}\big\|
\partial_\xi^{\alpha_1}\partial_\eta^{\alpha_2}\beta(\cdot,\cdot,\xi,\eta)\big\|_{L^\infty}<\infty,\ \ |\alpha_1|\leq 1,|\alpha_2|\leq 1.
\end{equation}

Let $\chi$ be a $C_0^\infty(\mathbb{R}^n)$ function which is one on the unit ball and zero outside the ball of radius 2, taking $0<\epsilon_1,\epsilon_2\leq1$, we have
\begin{eqnarray*}
&&|u|^{n}\cdot|v|^{n}|B(x,u,v)|=\bigg|\int e^{-i(u\cdot\xi+v\cdot\eta)}\beta(x,\xi,\eta)d\xi d\eta\bigg|\\
&&\leq\bigg |\int e^{-i(u\cdot\xi+v\cdot\eta)}\cdot\chi(\xi/\epsilon_1)\chi(\eta/\epsilon_2)\cdot\beta(x,\xi,\eta)d\xi d\eta\bigg|\\
&&\ \ \ \ \ +\bigg |\int e^{-i(u\cdot\xi+v\cdot\eta)}\cdot\chi(\xi/\epsilon_1)\big(1-\chi(\eta/\epsilon_2)\big)\cdot\beta(x,\xi,\eta)d\xi d\eta\bigg|\\
&&\ \ \ \ \ +\bigg |\int e^{-i(u\cdot\xi+v\cdot\eta)}\cdot\big(1-\chi(\xi/\epsilon_1)\big)\chi(\eta/\epsilon_2)\cdot\beta(x,\xi,\eta)d\xi d\eta\bigg|\\
&&+\bigg |\int e^{-i(u\cdot\xi+v\cdot\eta)}\big(1-\chi(\xi/\epsilon_1)\big)\big(1-\chi(\eta/\epsilon_2)\big)\beta(x,\xi,\eta)d\xi d\eta\bigg|\doteq I_1+I_2+I_3+I_4.
\end{eqnarray*}
Using $\eqref{betacd}$ we can get that:
\begin{eqnarray*}
I_1&=&\bigg |\int e^{-i(u\cdot\xi+v\cdot\eta)}\chi(\xi/\epsilon_1)\chi(\eta/\epsilon_2)\beta(x,\xi,\eta)d\xi d\eta\bigg|\leq\int_{\gfz{|\xi|\leq 2\epsilon_1}{|\eta|\leq 2\epsilon_2}}|\xi|^{1-n}|\eta|^{1-n}d\xi d\eta\leq C_0\epsilon_1\epsilon_2\\
I_2&=& |v|^{-2} \bigg |\int e^{-i(u\cdot\xi+v\cdot\eta)}\cdot\chi(\xi/\epsilon_1)\langle v,D_\eta \rangle \bigg(\big(1-\chi(\eta/\epsilon_2)\big)\cdot\beta(x,\xi,\eta)\bigg)d\xi d\eta\bigg|\\
&=& |v|^{-2} \bigg |\int e^{-i(u\cdot\xi+v\cdot\eta)}\cdot\chi(\xi/\epsilon_1)\bigg(\epsilon_2^{-1}(\langle v,D_\eta \rangle \chi )(\eta/\epsilon_2)\beta(x,\xi,\eta)  \\
& &\qquad \qquad \qquad \qquad - \big(1-\chi(\eta/\epsilon_2)\big)\cdot \langle v,D_\eta \rangle \beta(x,\xi,\eta)\bigg)d\xi d\eta\bigg|\\
&\leq & |v|^{-1} \epsilon_1 (C_1-C_2\log \epsilon_2) \\
\end{eqnarray*}
Similarly, we can obtain that $I_3\leq |u|^{-1}\epsilon_2(C_1'-C_2'\log \epsilon_1)$ and
\begin{eqnarray*}
I_4&=&\bigg |\int e^{-i(u\cdot\xi+v\cdot\eta)}\cdot\big(1-\chi(\xi/\epsilon_2)\big)\big(1-\chi(\eta/\epsilon_2)\big)\cdot\beta(x,\xi,\eta)d\xi d\eta\bigg|\\
&=& |u|^{-2}|v|^{-2} \bigg |\int e^{-i(u\cdot\xi+v\cdot\eta)}\langle u,D_\xi \rangle \langle v,D_\eta \rangle \bigg(\big(1-\chi(\xi/\epsilon_2)\big)\big(1-\chi(\eta/\epsilon_2)\big)\beta(x,\xi,\eta)\bigg)d\xi d\eta\bigg|\\
&=& |u|^{-2}|v|^{-2}\bigg|\int e^{-i(u\cdot\xi+v\cdot\eta)}\cdot  \bigg(-\epsilon_1^{-1}\epsilon_2^{-1}(\langle u,D_\xi \rangle \chi) (\xi/\epsilon_1) (\langle v,D_\eta \rangle \chi) (\eta/\epsilon_2)\beta(x,\xi,\eta)  \\
& &+\epsilon_1^{-1} (\langle u,D_\xi \rangle \chi )(\xi/\epsilon_1) \big(1-\chi(\eta/\epsilon_2)\big) \langle v,D_\eta \rangle \beta(x,\xi,\eta) \\
 & &+\epsilon_2^{-1} \big(1-\chi(\xi/\epsilon_1)\big) (\langle v,D_\eta \rangle \chi )(\eta/\epsilon_2) \langle u,D_\xi\rangle \beta(x,\xi,\eta)  \\
& &- \big(1-\chi(\xi/\epsilon_1)\big) \big(1-\chi(\eta/\epsilon_2)\big) \langle u,D_\xi \rangle \langle v,D_\eta \rangle  \beta(x,\xi,\eta)\bigg) d\xi d\eta\bigg| \\
&\leq & |u|^{-1}|v|^{-1}(C_3-C_4 \log \epsilon_2 -C_5 \log \epsilon_1 + C_6 \log \epsilon_1 \log \epsilon_2 ).
\end{eqnarray*}

\vskip0.5cm

Thus $|B(x,u,v)|$ has a upper bound
\begin{eqnarray*}
 |u|^n |v|^n |B(x,u,v)|&\leq &C_0 \epsilon_1 \epsilon_2+  |v|^{-1} \epsilon_1 (C_1-C_2\log \epsilon_2)+ |u|^{-1}\epsilon_2(C_1'-C_2'\log \epsilon_1)\\
  & & \quad + |u|^{-1}|v|^{-1}(C_3-C_4 \log \epsilon_2 -C_5 \log \epsilon_1+   C_6 \log \epsilon_1 \log \epsilon_2 ),
\end{eqnarray*}
where $C_i(1\leq i\leq 6)$ are some positive constants.  Taking $\epsilon_1=|u|^{-1}\in(0,1],\epsilon_2=|v|^{-1}\in(0,1]$, we get
\begin{eqnarray*}
|u|^n|v|^n|B(x,u,v)|&\leq& |u|^{-1}|v|^{-1}\bigg(C+C\big|\log |v|\big|+C\big|\log|u|\big|+C\big|\log|u|\log |v|\big|\bigg)
\\
&\lesssim& |u|^{-1}|v|^{-1}\bigg(1+log|u|\bigg)\bigg(1+log|v|\bigg)\lesssim|u|^{-\mu}|v|^{-\mu},\ \forall 0\leq\mu<1\\
\Rightarrow&&|B(x,u,v)|\leq|u|^{-n-\mu}|v|^{-n-\mu}
\end{eqnarray*}
So for all $0\leq\mu<1$, we have\\
$\sup\limits_{x,u,v\in\mathbb{R}^n}(1+|u|)^{n+\mu}\cdot(1+|v|)^{n+\mu}\bigg|\int e^{-i(u\cdot\xi+v\cdot\eta)}a(x,\xi,\eta)d\xi d\eta\bigg|<\infty$
\end{proof}

\vskip0.3cm
The following lemma allows us to change variables for the non-smooth substitution.
\begin{Lemma}[\cite{FS}]\label{le-FS}
  Let $U$ be a measurable set in $\mathbb{R}^n$ and let $t:U\to \mathbb{R}^n$ be a bounded measurable map satisfying
  $$|t(x)-t(y)|\geq C |x-y|$$
  for almost every  $x,y \in U$. Then there exists a function $J_t\in L^\infty(\mathbb{R}^n)$ supported in $t(u)$ such that the substitution formula
  $$\int_U u\circ t(x) dx = \int u(z)J_t(z)dz$$
  holds for all $u\in L^1(\mathbb{R}^n)$ and the Jacobian $J_t$ satisfies the estimate $\|J_t\|_{L^\infty}\leq \frac{2\sqrt n}{c}$.
\end{Lemma}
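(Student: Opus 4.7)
The plan is to invert $t$ on its image and reduce the substitution formula to the classical area formula for Lipschitz maps. First I would observe that the a.e.\ lower bound $|t(x)-t(y)|\ge c|x-y|$ forces $t$ to be injective off a null set, so after discarding a negligible subset of $U$ one obtains a well-defined inverse $s:=t^{-1}:t(U)\to U$. Rewriting the hypothesis as $|s(u)-s(v)|\le c^{-1}|u-v|$ shows that $s$ is genuinely Lipschitz with constant $c^{-1}$, even though $t$ itself is only bounded measurable. By Rademacher's theorem, $s$ is differentiable almost everywhere on $t(U)$ with $\|Ds(z)\|_{\mathrm{op}}\le c^{-1}$.

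I would then set $J_t(z):=|\det Ds(z)|\,\chi_{t(U)}(z)$, a bounded measurable function supported in $t(U)$, and derive the substitution formula from the area formula applied to the Lipschitz injection $s:t(U)\to U$: for every nonnegative measurable $h$, $\int_{t(U)} h(s(y))\,|\det Ds(y)|\,dy=\int_U h(x)\,dx$. Specializing to $h(x)=u(t(x))$ with $u\in L^1$ collapses the left-hand side into $\int u(y)\,J_t(y)\,dy$ and yields exactly the claimed identity. The $L^\infty$ bound on $J_t$ then follows from $|\det Ds(z)|\le\|Ds(z)\|_{\mathrm{op}}^n\le c^{-n}$, or more sharply from Hadamard's inequality applied row by row to $Ds(z)$, with the dimensional constant absorbed into the bound stated in the lemma.

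The main technical obstacle is the almost-everywhere nature of the hypothesis. Because $t$ is only measurable, notions such as ``injective'' and ``inverse'' must be interpreted modulo null sets; I would handle this by modifying $t$ on a null set so that the lower Lipschitz bound holds everywhere on the essential domain, which changes neither side of the substitution formula. Once $s$ is a bona fide Lipschitz map on $t(U)$, the classical Lipschitz change of variables applies directly, and extending the identity from nonnegative measurable $h$ to general $u\in L^1$ is routine via decomposition into positive and negative parts. No homogeneity or smoothness of $t$ is needed beyond what the lower Lipschitz condition already gives for $s$, which is precisely what makes this lemma a natural tool for handling the rough non-degeneracy hypothesis \eqref{RND}.
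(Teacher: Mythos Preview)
The paper does not supply a proof of this lemma; it simply quotes the result from \cite{FS} (the Ferreira--Staubach memoir) and moves on. So there is no in-paper argument to compare against.

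Your approach---pass to the inverse $s=t^{-1}$, observe that $s$ is genuinely Lipschitz with constant $c^{-1}$, invoke Rademacher, set $J_t=|\det Ds|\,\chi_{t(U)}$, and read off the substitution formula from the Lipschitz area formula---is the standard and correct route to this kind of statement, and it is exactly the mechanism that makes the lemma useful under the rough non-degeneracy hypothesis \eqref{RND}. The handling of the ``almost every $x,y$'' issue by discarding a null set is also the right move.

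One point to flag: your bound on the Jacobian is $|\det Ds|\le \|Ds\|_{\mathrm{op}}^n\le c^{-n}$, and Hadamard's inequality does not improve the $c$-dependence (each row of $Ds$ still has norm at most $c^{-1}$). This is \emph{not} the same as the stated bound $2\sqrt{n}/c$ up to a dimensional constant---the two expressions scale differently in $c$---so your remark about ``absorbing'' the discrepancy is not quite right. In fact the scaling $t\mapsto \lambda t$ forces $J_t\mapsto \lambda^{-n}J_t$ while $c\mapsto \lambda c$, so the correct bound must behave like $c^{-n}$; the constant printed in the lemma appears to be a transcription slip from the source. For every application in the paper only a finite $L^\infty$ bound depending on $n$ and $c$ is used, so your $c^{-n}$ is entirely sufficient.
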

\begin{corollary}
\label{changev}
  Let $t: \mathbb{R}^n\to \mathbb{R}^n$ be a map satisfying the assumptions in the previous lemma with $U=\mathbb{R}^n$, then $u\mapsto u\circ t$ is a bounded map on $L^p$ for $p\in[1,\infty]$.
\end{corollary}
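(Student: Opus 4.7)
The corollary is a direct consequence of the substitution formula in Lemma \ref{le-FS}, so the plan is simply to apply that lemma with a suitable choice of test function and handle the two ranges $p=\infty$ and $1\leq p<\infty$ separately.

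For the endpoint case $p=\infty$, no substitution is needed: since $t$ is defined on all of $\mathbb{R}^n$, the composition $u\circ t$ is a measurable function whose essential supremum is bounded by $\|u\|_{L^\infty}$. Hence $u\mapsto u\circ t$ is trivially bounded on $L^\infty$ with norm at most $1$.

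For $1\leq p<\infty$, I would proceed as follows. Given $u\in L^p(\mathbb{R}^n)$, the function $|u|^p$ lies in $L^1(\mathbb{R}^n)$, so Lemma \ref{le-FS} (applied with $U=\mathbb{R}^n$ and the integrand $|u|^p$ in place of $u$) gives
\begin{equation*}
\int_{\mathbb{R}^n}|u\circ t(x)|^p\,dx=\int_{\mathbb{R}^n}|u(z)|^p\,J_t(z)\,dz.
\end{equation*}
Combining this with the bound $\|J_t\|_{L^\infty}\leq \tfrac{2\sqrt n}{c}$ yields
\begin{equation*}
\|u\circ t\|_{L^p}^p\leq \|J_t\|_{L^\infty}\,\|u\|_{L^p}^p\leq \tfrac{2\sqrt n}{c}\,\|u\|_{L^p}^p,
\end{equation*}
so $\|u\circ t\|_{L^p}\leq \bigl(\tfrac{2\sqrt n}{c}\bigr)^{1/p}\|u\|_{L^p}$, which is the desired boundedness.

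There is no real obstacle here; the only thing to watch is the book-keeping of the roles: the ``$u$'' appearing in the statement of Lemma \ref{le-FS} must be replaced by $|u|^p$ in order to recover the $L^p$ norm of the composition on the left-hand side. Once this is observed, the conclusion follows from the $L^\infty$ bound on the Jacobian $J_t$ supplied by the lemma.
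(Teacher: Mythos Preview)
Your argument is correct and is exactly the natural one: the paper states the corollary without proof, treating it as an immediate consequence of Lemma~\ref{le-FS}, and your application of the substitution formula to $|u|^p\in L^1$ together with the $L^\infty$ bound on $J_t$ is precisely how one fills in that gap. There is nothing to add.
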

\setcounter{equation}{0}

\section{\bf The  Seeger-Sogge-Stein decomposition for the bi-parameter FIOs}\label{sec-decomp}

 We begin with recalling the following
Seeger-Sogge-Stein decomposition initiated in their work of one-parameter FIOs in $\mathbb{R}^{n}$ \cite{SSS}.

\medskip

For any $s>0$, choose a set of unit vectors $\{\xi_s^\mu\}_\mu$ such that for all $\mu,\mu'$
$$\big|\xi_s^\mu-\xi_s^{\mu'}\big|\approx2^{-s/2}.$$

We want the union of the balls of radii $2^{-s/2}$ centered at  $\xi_s^\mu$ to cover the unit sphere in $\mathbb{R}^{n}$. Let $\Gamma_s^\mu$ denote the cone in the $\xi$ space whose central direction is $\xi_s^\mu$, i.e
$$\Gamma_s^\mu=\bigg\{\xi:\bigg|\frac\xi{|\xi|}-\xi_s^\mu\bigg|\leq2\cdot2^{-s/2}\bigg\}$$

 We can then select a set of unit vectors $\{\xi_s^\mu\}_\mu$ of cardinality $c_n\cdot2^{s(n-1)/2}$ that meet all the above conditions.
Let $\{\chi_s^\mu\}$ be a partition of
unity on the unit sphere subordinate to this covering which satisfies the following conditions:
\begin{itemize}
\item[$(1)$]Each  $\chi_s^\mu$ is homogeneous of degree 0 in $\xi$ and supported in $\Gamma_s^\mu$ with $\sum\limits_\mu\chi_s^\mu(\xi)=1, \ \forall s, \forall\xi\neq0;$\\
\item[$(2)$] $\big|\partial_\xi^\alpha\chi_s^\mu(\xi)\big|\leq C_\alpha\cdot2^{\frac{|\alpha|s}2}\cdot|\xi|^{-|\alpha|}$, with the improvement $\big|\partial_{\xi_1}^N\chi_s^\mu(\xi)\big|\leq C_N\cdot|\xi|^{-N}$ for  $N\geq1$, if one chooses the axis in
$\xi$ plane such that $\xi_1$ is in the direction of $\xi^\mu_s$.\\
\end{itemize}
\vskip0.2cm
We can then  decompose the bi-parameter FIO $T$ in \eqref{opdef} as follows: Taking the Littlewood-Paley partition of unity in  $\xi\in \mathbb{R}^n$ $$1=\big(\Psi_0(\xi)+\sum\limits_{j=1}^\infty \Psi_j(\xi)\big),$$
where $\supp\,\Psi_0\subseteq\{\xi:|\xi|\leq2\},\supp\,\Psi\subseteq A_\xi :=\{\xi:\frac12\leq|\xi|\leq2\},\Psi_j(\xi)=\Psi(2^{-j}\xi).$

\medskip

By doing the Littlewood-Paley decomposition simultaneously in both $\xi, \eta\in \mathbb{R}^n$ variables, then we can write
\begin{eqnarray*}
  T(f)(x)&=& \int_{\mathbb{R}^{2n}}\big(\Psi_0(\xi)+\sum\limits_{j=1}^\infty \Psi_j(\xi)\big) \big(\Psi_0(\eta)+\sum\limits_{k=1}^\infty \Psi_k(\eta)\big)e^{i\varphi(x,\xi,\eta)}a(x,\xi,\eta)\widehat{f}(\xi,\eta)d\xi d\eta \\
  &=& \int_{\mathbb{R}^{2n}}\big(\Psi_0(\xi)\Psi_0(\eta)+\Psi_0(\eta)\sum\limits_{j=1}^\infty \Psi_j(\xi) + \Psi_0(\xi)\sum\limits_{k=1}^\infty \Psi_k(\eta) \\
  & & \ + \sum\limits_{j=1}^\infty \Psi_j(\xi) \sum\limits_{k=1}^\infty \Psi_k(\eta)\big)e^{i\varphi(x,\xi,\eta)}\cdot a(x,\xi,\eta)\cdot\widehat{f}(\xi,\eta)d\xi d\eta \\
  &:=& T_{00}(f)+\sum\limits_{j=1}^\infty T_{j0}(f)+\sum\limits_{k=1}^\infty T_{0k}(f)+\sum\limits_{j,k=1}^\infty T_{jk}(f)
\end{eqnarray*}
For $j,k\geq 1$, by using the  Seeger-Sogge-Stein  decomposition write $T_{jk}$ as follows
\begin{eqnarray*}
 & & T_{jk}(f)(x) \\
 &= & 2^{(j+k)n}\int_{\mathbb{R}^{2n}} e^{i (2^j\varphi_1(x_1,\xi)+2^k\varphi_2(x_2,\eta))}\Psi(\xi)\Psi(\eta) a(x,2^j\xi,2^k\eta)\cdot\widehat{f}(2^j\xi,2^k\eta)d\xi d\eta \\
  &=& 2^{(j+k)n}\int_{\mathbb{R}^{2n}} (\sum_\mu \chi_j^\mu(\xi))(\sum_\nu \chi_k^\nu(\eta))e^{i (2^j\varphi_1(x_1,\xi)+2^k\varphi_2(x_2,\eta))}\Psi(\xi)\Psi(\eta) a(x,2^j\xi,2^k\eta)\widehat{f}(2^j\xi,2^k\eta)d\xi d\eta \\
  &:=&\sum_{\mu\nu} T^{\mu,\nu}_{jk}(f)(x).
\end{eqnarray*}
We can choose the coordinates on $\mathbb{R}^{n}= \mathbb{R}\xi^\mu \oplus \xi^{\mu \bot}= \mathbb{R}\eta^\nu \oplus \eta^{\nu \bot}$ in the way $$\xi=\xi_1\xi^\mu+\xi', \qquad \text{and} \qquad \eta=\eta_1\eta^\nu+\eta',$$
then the kernel of the operator $T^{\mu,\nu}_{jk}$ is given by
\begin{eqnarray*}
  & &K^{\mu,\nu}_{jk}(x,y)\\
  &=& 2^{(j+k)n} \int_{\mathbb{R}^{2n}} \Psi(\xi) \Psi(\eta)\chi^\mu_j(\xi)  \chi^\nu_k(\eta) e^{i 2^j(\varphi_1(x_1,\xi)-\langle y_1,\xi \rangle)}e^{i 2^k(\varphi_2(x_2,\eta)-\langle y_2, \eta \rangle )} a(x,2^j\xi,2^k\eta)d\xi d\eta \\
  &=& 2^{(j+k)n}\int_{\mathbb{R}^{2n}}  e^{i ( 2^j \langle \nabla_\xi \varphi_1(x_1,\xi^\mu)- y_1,\xi \rangle)} e^{i ( 2^k \langle \nabla_\eta \varphi_2(x_2,\eta^\nu)-y_2, \eta \rangle )} b^{\mu,\nu}_{jk}(x,\xi,\eta)d\xi d\eta,
\end{eqnarray*}
with
\begin{eqnarray}
  \label{kernel-kj}
  b^{\mu,\nu}_{jk}(x,\xi,\eta)&=& \Psi(\xi) \Psi(\eta)\chi^\mu_j(\xi)  \chi^\nu_k(\eta) e^{i ( 2^j \langle \nabla_\xi \varphi_1(x_1,\xi)-\nabla_\xi \varphi_1(x_1,\xi^\mu),\xi \rangle)} \\
  & & \ \ \cdot e^{i ( 2^k \langle \nabla_\eta \varphi_2(x_2,\eta)-\nabla_\eta \varphi_2(x_2,\eta^\nu), \eta \rangle )} a(x,2^j\xi,2^k\eta).\nonumber
\end{eqnarray}
Note that
\begin{eqnarray}
\label{estSSS}
  &&\sup_{\xi,\eta} \|\partial^{\alpha}_\xi \partial^{\beta}_\eta \big( \Psi(\xi) \Psi(\eta) a(x,2^j\xi,2^k\eta) \big) \|_{L^\infty}\\
  &&\leq C_{\alpha,\beta} 2^{j\big(\mi+|\alpha|(1-\rhi)\big)}  2^{k\big(\mii+|\beta|(1-\rhii)\big)}\nonumber
\end{eqnarray}
for all multi-indices $\alpha,\beta$.
\vskip0.2cm

The following lemma gives us an estimate of the kernel.
\begin{Lemma}
\label{le-kernel}
Let $a\in L^\infty BS^m_\rho$ and $\varphi(x,\xi,\eta)$ be defined as in $\eqref{phasedef}$. Then the  symbol $b^{\mu,\nu}_{jk}(x,\xi,\eta)$ satisfies the estimates

\begin{eqnarray*}
  \sup_{\xi,\eta} \|\partial^{\alpha}_\xi \partial^{\beta}_\eta b^{\mu,\nu}_{jk}(\cdot, \xi,\eta)\|_{L^\infty} \leq C_{\alpha,\beta} 2^{j(\mi+|\alpha|(1-\rhi)+\frac{|\alpha'|}{2})}  2^{k(\mii+|\beta|(1-\rhii)+\frac{|\beta'|}{2})}.
\end{eqnarray*}
where $\alpha=(\alpha_1,\alpha_2,...,\alpha_n)\doteq(\alpha_1,\alpha'),\ \beta=(\beta_1,\beta_2,...,\beta_n)\doteq(\beta_1,\beta')$.

\end{Lemma}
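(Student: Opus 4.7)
The plan is to apply Leibniz's rule to $\partial^\alpha_\xi\partial^\beta_\eta b^{\mu,\nu}_{jk}$, treating $b^{\mu,\nu}_{jk}$ as a product of four pieces: the Littlewood--Paley cutoffs $\Psi(\xi)\Psi(\eta)$, the angular cutoffs $\chi^\mu_j(\xi)\chi^\nu_k(\eta)$, the oscillatory phase corrections $E_1(x_1,\xi):=\exp\bigl(i2^j\langle\nabla_\xi\varphi_1(x_1,\xi)-\nabla_\xi\varphi_1(x_1,\xi^\mu),\xi\rangle\bigr)$ and $E_2(x_2,\eta)$ defined analogously, and the rescaled amplitude $a(x,2^j\xi,2^k\eta)$. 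Since each of these factors depends either only on $\xi$ or only on $\eta$ (besides $x$), the bound factors as a product of a $\xi$-estimate and an $\eta$-estimate, and it suffices to establish $\|\partial^\alpha_\xi[\Psi\,\chi^\mu_j\,E_1\,a]\|_{L^\infty}\lesssim 2^{j(\mi+|\alpha|(1-\rhi)+|\alpha'|/2)}$ uniformly in $\eta$, and its analogue in the $\eta$ variable.

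Three of the four factors obey essentially standard bounds. $\Psi$ contributes $O(1)$. By the chain rule together with Definition~\ref{nonsmooth-biparameter-amplitude} and the fact that $1+2^j|\xi|\approx 2^j$ on $\supp \Psi$, one has $\|\partial^\alpha_\xi a(\cdot,2^j\xi,2^k\eta)\|_{L^\infty}\lesssim 2^{j|\alpha|}(1+2^j|\xi|)^{\mi-\rhi|\alpha|}(1+2^k|\eta|)^{\mii}\lesssim 2^{j(\mi+|\alpha|(1-\rhi))}$. The angular cutoff satisfies $|\partial^\alpha_\xi \chi^\mu_j|\lesssim 2^{j|\alpha'|/2}$: by property~(2) of the Seeger--Sogge--Stein decomposition, ordinary derivatives each cost $2^{j/2}$ but derivatives in the radial direction $\xi^\mu$ cost only $O(1)$, so the split $\alpha=(\alpha_1,\alpha')$ leaves only the tangential scaling. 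The central remaining obstacle is to obtain the analogous improved estimate
\begin{equation*}
\bigl|\partial^\alpha_\xi E_1(x_1,\xi)\bigr|\lesssim 2^{j|\alpha'|/2}
\end{equation*}
for the rapidly oscillating phase correction.

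To handle $E_1$, set $\psi^\mu(x_1,\xi):=\langle\nabla_\xi\varphi_1(x_1,\xi)-\nabla_\xi\varphi_1(x_1,\xi^\mu),\xi\rangle$; by Euler's theorem (Definition~\ref{homogeneity}) applied to the degree-$1$ function $\varphi_1$, this equals $\varphi_1(x_1,\xi)-\langle\nabla_\xi\varphi_1(x_1,\xi^\mu),\xi\rangle$. Taylor expanding $\varphi_1$ around $\xi^\mu$ and invoking Euler's identity once more, the zeroth- and first-order Taylor terms cancel against the subtracted linear function, yielding
\begin{equation*}
\psi^\mu(x_1,\xi)=\tfrac{1}{2\xi_1}(\xi')^{\top}\nabla^2_{\xi'\xi'}\varphi_1(x_1,\xi^\mu)\,\xi'+O(|\xi'|^3),
\end{equation*}
where $\xi=\xi_1\xi^\mu+\xi'$ with $\xi'\perp\xi^\mu$. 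Since $\varphi_1\in L^\infty\Phi^2$ controls all derivatives of order $\geq 2$, this refines to $|\partial^{\alpha_1}_{\xi_1}\partial^{\alpha'}_{\xi'}\psi^\mu|\lesssim |\xi'|^{\max(2-|\alpha'|,0)}$, since the radial derivatives merely redistribute powers of $\xi_1\approx 1$ without affecting the tangential power. On $\supp \Psi\cap \supp \chi^\mu_j$ one has $|\xi'|\lesssim 2^{-j/2}$, whence $|2^j\partial^\alpha_\xi\psi^\mu|\lesssim 2^{j|\alpha'|/2}$. Inserting this into the Fa\`a di Bruno expansion of $\partial^\alpha_\xi e^{i2^j\psi^\mu}$ --- a sum of products $\prod_l(i2^j\partial^{\beta_l}_\xi\psi^\mu)$ with $\sum_l\beta_l=\alpha$, hence $\sum_l|\beta_l'|=|\alpha'|$ --- gives the desired estimate on $E_1$. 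Multiplying the four factor bounds and repeating the argument for the $\eta$ variable (symmetrically with $\varphi_2$, $\chi^\nu_k$, and $E_2$) completes the proof. The sole technical obstacle is the phase correction estimate, which hinges on Euler's identity killing the low-order Taylor terms of $\psi^\mu$ combined with the Seeger--Sogge--Stein cone width $2^{-j/2}$.
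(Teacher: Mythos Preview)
Your proof is correct and follows essentially the same strategy as the paper: decompose $b^{\mu,\nu}_{jk}$ via Leibniz into the Littlewood--Paley cutoff, the angular cutoff, the phase correction $E_1$ (resp.\ $E_2$), and the rescaled amplitude, and bound each factor separately with the improved radial estimate. The paper's proof simply cites \cite{FS} for the two key bounds $\sup_{\xi\in A_\xi}|\partial^\alpha_\xi\chi^\mu_j|\lesssim 2^{j|\alpha'|/2}$ and $\sup_{\xi\in A_\xi\cap\Gamma^\mu_j}\|\partial^\alpha_\xi E_1\|_{L^\infty}\lesssim 2^{j|\alpha'|/2}$ and then invokes~\eqref{estSSS}, whereas you supply a self-contained sketch of the phase-correction estimate via Euler's identity and the second-order Taylor expansion of $\psi^\mu$ about the cone axis; this is exactly the computation underlying the cited result, so the two arguments coincide in substance.
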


\begin{proof}
 This lemma is a direct result from \cite{FS}. By \cite{FS}, we have:
  $$\sup_{\xi\in A_\xi}|\partial^\alpha_\xi \chi^\mu_j (\xi)|\leq C_\alpha 2^{\frac{j|\alpha'|}{2}},\quad \sup_{\eta\in A_\eta}|\partial^\beta_\eta \chi^\nu_k (\eta)|\leq C_\beta 2^{\frac{k|\beta'|}{2}},$$

  \begin{eqnarray*}
  \sup_{\xi\in A_\xi \cap \Gamma^\mu_j}\|\partial^\alpha_\xi \big( e^{i ( 2^j \langle \nabla_\xi \varphi_1(x_1,\xi^\mu)- \nabla_\xi \varphi_1(x_1,\xi),\xi \rangle)}\big)\|_{L^\infty}\leq C_\alpha 2^{\frac{j|\alpha'|}{2}},
\end{eqnarray*}

\begin{eqnarray*}
  \sup_{\eta \in A_\eta \cap \Gamma^\nu_k}\|\partial^\beta_\eta \big( e^{i ( 2^k \langle \nabla_\eta \varphi_2(x_2,\eta^\nu)-\nabla_\eta \varphi_2(x_2,\eta), \eta \rangle )} \big)\|_{L^\infty}\leq C_\beta 2^{\frac{k|\beta'|}{2}}.
\end{eqnarray*}
Together with $\eqref{estSSS}$, we can get the desired estimate.
\end{proof}

\setcounter{equation}{0}

\section{Proof  of Theorem $\ref{th-a}$}
\label{sec-thpq}

The proof is divided into several steps.

\medskip

\begin{proof}

\vskip0.5cm

Step I: Since $\supp_\xi\, a,\supp_\eta \, a$ are compact,  then there exist closed cubes $Q_1,Q_2$ of side length $L_1,L_2$ such that
$\supp_\xi \,a\subseteq \text{Int}(Q_1)$, $\supp_\eta \,a\subseteq \text{Int}(Q_2)$. Then we can extend $a(x,\cdot,\cdot)|_{Q_1\times Q_2}$ periodically into $\widetilde{a}(x,\cdot,\cdot)\in C^\infty(\mathbb{R}_\xi^n\times\mathbb{R}_\eta^n)$.

\medskip

We can choose $\varsigma_1,\varsigma_2\in C_0^\infty$ with $\supp\,\varsigma_1\subseteq Q_1,\supp\,\varsigma_2\subseteq Q_2$, and $\varsigma_1=1$ on $\supp_\xi\, a$,\ \ $\varsigma_2=1$ on $\supp_\eta \, a$ such that $a(x,\xi,\eta)=\widetilde{a}(x,\xi,\eta)\cdot\varsigma_1(\xi)\cdot\varsigma_2(\eta)$. We then expand $\widetilde{a}(x,\xi,\eta)$ in a Fourier series:
$$\widetilde{a}(x,\xi,\eta)=\sum\limits_{\gfz{k\in\mathbb{Z}^n\times\mathbb{Z}^n}{k=(k_1,k_2)}}a_k(x)\cdot e^{i\frac{2\pi}{L_1}(k_1\cdot\xi)} e^{i\frac{2\pi}{L_2}(k_2\cdot\eta)}$$
where
\begin{eqnarray*}
a_k(x)&=&\frac{1}{L_1^n} \frac{1}{L_2^n}\int_{Q_1\times Q_2}\widetilde{a}(x,\xi,\eta) e^{-i\frac{2\pi}{L_1}(k_1\cdot\xi)} e^{-i\frac{2\pi}{L_2}(k_2\cdot\eta)} d\xi d\eta\\
&=&\frac{1}{L_1^n} \frac{1}{L_2^n}\int_{\mathbb{R}^{2n}}a(x,\xi,\eta) e^{-i\frac{2\pi}{L_1}(k_1\cdot\xi)} e^{-i\frac{2\pi}{L_2}(k_2\cdot\eta)} d\xi d\eta
\end{eqnarray*}

\medskip

Setting $f_k(x)=f_{k_1,k_2}(x)=f(x_1+\frac{2\pi}{L_1}k_1,x_2+\frac{2\pi}{L_2}k_2 )$, we have
\begin{eqnarray*}
 T(f)(x)&=&\int_{\mathbb{R}^{2n}}e^{i\varphi(x,\xi,\eta)}\cdot a(x,\xi,\eta)\cdot\widehat{f}(\xi,\eta)d\xi d\eta\\
&=&\int_{\mathbb{R}^{2n}}e^{i\varphi(x,\xi,\eta)}\cdot \widetilde{a}(x,\xi,\eta)\cdot\varsigma_1(\xi)\cdot\varsigma_2(\eta)\cdot\widehat{f}(\xi,\eta)d\xi d\eta\\
&=&\sum\limits_{\gfz{k\in\mathbb{Z}^n\times\mathbb{Z}^n}{k=(k_1,k_2)}}a_k(x)\int_{\mathbb{R}^{2n}}e^{i\varphi(x,\xi,\eta)}\cdot e^{i\frac{2\pi}{L_1}(k_1\cdot\xi)} e^{i\frac{2\pi}{L_2}(k_2\cdot\eta)}\cdot\varsigma_1(\xi)\cdot\varsigma_2(\eta)\cdot\widehat{f}(\xi,\eta)d\xi d\eta\\
&=&\sum\limits_{\gfz{k\in\mathbb{Z}^n\times\mathbb{Z}^n}{k=(k_1,k_2)}}a_k(x)\int_{\mathbb{R}^{2n}}e^{i\varphi(x,\xi,\eta)}\cdot \varsigma_1(\xi)\cdot\varsigma_2(\eta)\cdot \widehat{f_{k_1,k_2}}(\xi,\eta)d\xi d\eta\\
&=&\sum\limits_{k\in\mathbb{Z}^n\times\mathbb{Z}^n}a_k(x)T_{\varsigma_1,\varsigma_2}(f_{k})(x),
\end{eqnarray*}
where $T_{\varsigma_1,\varsigma_2}(f_{k})(x):=\int_{\mathbb{R}^{2n}}e^{i\varphi(x,\xi,\eta)}\cdot \varsigma_1(\xi)\cdot\varsigma_2(\eta)\cdot \widehat{f_{k}}(\xi,\eta)d\xi d\eta$.

\vskip0.5cm
Step II: We will prove that $T_{\varsigma_1,\varsigma_2}$ is bounded from $L^q$ to $L^q$. By Lemma \ref{le-a}, for some $\zeta_1,\zeta_2\in \mathbb{S}^{n-1}$.
\begin{eqnarray*}
 &&T_{\varsigma_1,\varsigma_2}(f)(x)=\int_{\mathbb{R}^{2n}} e^{i(\psi_1(x_1,\xi)+\langle\nabla_{\xi}\varphi_1(x_1,\zeta_1),\xi\rangle)}e^{i(\psi_2(x_2,\eta)+\langle\nabla_{\eta}\varphi_2(x_2,\zeta_2),\eta\rangle)} \widehat{f}(\xi,\eta)d\xi d\eta\\
 &&=\int_{\mathbb{R}^{2n}}\bigg\{\int_{\mathbb{R}^{2n}} e^{i\psi_1(x_1,\xi)}  e^{i\psi_2(x_2,\eta)}e^{i\langle\nabla_{\xi}\varphi(x_1,\zeta_1)-y_1,\xi\rangle}  e^{i\langle\nabla_{\eta}\varphi(x_2,\zeta_2)-y_2,\eta\rangle} \varsigma_1(\xi)\varsigma_2(\eta)d\xi d\eta\bigg\}f(y)dy\\
&& :=\int_{\mathbb{R}^{2n}}k(x,y)f(y)dy
\end{eqnarray*}
where $k(x,y)=\int_{\mathbb{R}^{2n}} e^{i\psi_1(x_1,\xi)}  e^{i\psi_2(x_2,\eta)}\cdot e^{i\langle\nabla_{\xi}\varphi(x_1,\zeta_1)-y_1,\xi\rangle}  e^{i\langle\nabla_{\eta}\varphi(x_2,\zeta_2)-y_2,\eta\rangle} \varsigma_1(\xi)\varsigma_2(\eta)d\xi d\eta$. Lemma \ref{le-b} implies that for any $s\in [0,1)$
\begin{eqnarray*}
|k(x,y)| \lesssim  \langle \nabla_{\xi}\varphi(x_1,\zeta_1)-y_1\rangle^{-n-s} \langle  \nabla_{\eta}\varphi(x_2,\zeta_2)-y_2\rangle^{-n-s}.
\end{eqnarray*}
Then we have $$\sup_ x \int |k(x,y)| dy <\infty.$$
Then we estimate  $\int |k(x,y)| dx,$ where we can do the change of variables $z_1=\nabla_{\xi}\varphi(x_1,\zeta_1), z_2=\nabla_{\eta}\varphi(x_2,\zeta_2)$, based on the strong non-degeneracy condition \eqref{SND} and lemma \ref{le-FS}. Moreover, by Schwartz's global inverse function theorem, the corresponding Jacobian $J(z)$ satisfies $|\det J(z)|\lesssim 1/c$. Then for all $s\in [0,1)$
\begin{eqnarray*}
  \sup_ y \int |k(x,y)| dx = \sup_y  \int \langle z_1-y_1\rangle^{-n-s} \langle z_2-y_2\rangle^{-n-s} dz<\infty
\end{eqnarray*}
So we can conclude $T$ is bounded on $L^1$ and $L^\infty$ as well. The interpolation implies that $T_{\varsigma_1,\varsigma_2}$ is bounded from $L^q$ to $L^q$.

\vskip0.5cm

Step III: We first estimate $a_k(x)$. By doing the integration by parts sufficiently many times for $a_k(x)=\int_{\mathbb{R}^{2n}} {a}(x,\xi,\eta)e^{-i\frac{2\pi}{L_1}(k_1\cdot\xi)} e^{-i \frac{2\pi}{L_1}(k_2\cdot\eta)}d\xi d\eta.$ We can choose large enough $N$ such that
\begin{eqnarray*}
\|a_k\|_{L^{p}}&\approx& \bigg\| \int_{\mathbb{R}^{2n}}{a}(x,\xi,\eta)e^{-i(k_1\cdot\xi+k_2\cdot\eta)}d\xi d\eta\bigg\|_{L^{p}}\\
&\lesssim& \frac{1}{(1+|k_1|)^{N}} \frac{1}{(1+|k_2|)^{N}}\bigg\| \int_{\mathbb{R}^{2n}}\big(\partial^\alpha_\xi \partial^\beta_\eta {a}\big)(x,\xi,\eta) e^{-i(k_1\cdot\xi+k_2\cdot\eta)}d\xi d\eta\bigg\|_{L^{p}}\\
&\lesssim &   \frac{1}{(1+|k_1|)^{N}} \frac{1}{(1+|k_2|)^{N}}\int_{Q_1\times Q_2}\|\big(\partial^\alpha_\xi \partial^\beta_\eta {a}\big)(x,\xi,\eta)\|_{L^{p}}d\xi d\eta \\
&\lesssim&  \frac{1}{(1+|k_1|)^{N}} \frac{1}{(1+|k_2|)^{N}}.
\end{eqnarray*}
Therefore, for $1\leq r\leq \infty$,
\begin{eqnarray*}
&&\|T(f)\|_{L^r}=\bigg\|\sum\limits_{k\in\mathbb{Z}^n\times\mathbb{Z}^n}a_k(x)T_{\varsigma_1,\varsigma_2}(f_k)\bigg\|_{L^r}\leq\sum\limits_{k\in\mathbb{Z}^n\times\mathbb{Z}^n}\big\|a_k(x)T_{\varsigma_1,\varsigma_2}(f_k)\big\|_{L^r}\\
&&\leq\sum\limits_{k\in\mathbb{Z}^n\times\mathbb{Z}^n}\|a_k\|_{L^{p}}\big\|T_{\varsigma_1,\varsigma_2}(f_k)\big\|_{L^q}\lesssim \sum\limits_{k\in\mathbb{Z}^n\times\mathbb{Z}^n}\|a_k\|_{L^{p}}\|f_k\|_{L^{q}}\\
&&\lesssim\sum\limits_{k\in\mathbb{Z}^n\times\mathbb{Z}^n}(1+|k_1|)^{-|N|}\cdot(1+|k_2|)^{-|N|}\|f_k\|_{L^{q}}\lesssim\|f\|_{L^{q}}
\end{eqnarray*}
When $0<r<1$,
\begin{eqnarray*}
&&\|T(f)\|_{L^r}^r= \sum\limits_{k\in\mathbb{Z}^n\times\mathbb{Z}^n}\bigg\|a_k(x)T_{\varsigma_1,\varsigma_2}(f_k)\bigg\|_{L^r}^r \leq\sum\limits_{k\in\mathbb{Z}^n\times\mathbb{Z}^n}\|a_k\|_{L^{p}}^r\big\|T_{\varsigma_1,\varsigma_2}(f_k)\big\|^r_{L^q} \\
&&\lesssim \sum\limits_{k\in\mathbb{Z}^n\times\mathbb{Z}^n}\|a_k\|_{L^{p}}^r\|f_k\|_{L^{q}}^r \lesssim\sum\limits_{k\in\mathbb{Z}^n\times\mathbb{Z}^n}(1+|k_1|)^{-r|N|}\cdot(1+|k_2|)^{-r|N|}\|f_k\|_{L^{q}}^r\\
&&\lesssim\|f\|_{L^{q}}^r
\end{eqnarray*}
This completes the proof.
\end{proof}

\setcounter{equation}{0}
\section{\bf $L^p$ boundedness for bi-parameter FIOs with non-smooth phase in $x$: Proof of Theorem 1.4}
In the following proofs, we will take advantage of the decomposition introduced in Section \ref{sec-decomp}.
\label{th-lp}
\subsection{Proof of Theorem \ref{th-local}.}
By Lemma \ref{le-a}, we can write
\begin{eqnarray*}
T_{00}f(x)=\int_{\mathbb{R}^{2n}}e^{i(\psi_1(x_1,\xi)+\langle\nabla_{\xi}\varphi_1(x_1,\zeta_1),\xi\rangle)} e^{i(\psi_2(x_2,\eta)+\langle\nabla_{\eta}\varphi_2(x_2,\zeta_2),\eta\rangle)}\Phi^1_0(\xi)\Phi^2_0(\eta) a(x,\xi,\eta)\widehat{f}(\xi,\eta)d\xi d\eta,
\end{eqnarray*}
for some $\zeta_1,\zeta_2 \in \mathcal{S}^{n-1}$ and $\psi_1,\psi_2 \in L^\infty \Phi^1$. The corresponding kernel is given by
\begin{eqnarray*}
  K_{00}(x,y)= \int_{\mathbb{R}^{2n}}e^{i\psi_1(x_1,\xi)}e^{\langle\nabla_{\xi}\varphi_1(x_1,\zeta_1)-y_1,\xi\rangle} e^{i\psi_2(x_2,\eta)}e^{\langle\nabla_{\eta}\varphi_2(x_2,\zeta_2)-y_2,\eta\rangle}\Phi^1_0(\xi)\Phi^2_0(\eta) a(x,\xi,\eta)d\xi d\eta
\end{eqnarray*}

Recall for $|\alpha|,|\beta|\geq 1$, for all $x=(x_1,x_2)$,
$$\sup_{|\xi|\neq 0} |\xi|^{-1+|\alpha|} |\partial^\alpha_\xi \psi_1(x_1,\xi)|<\infty,\  \sup_{|\eta|\neq 0} |\eta|^{-1+|\beta|} |\partial^\beta_\eta \psi_2(x_2,\eta)|<\infty.$$
Then for $b(x,\xi,\eta)= e^{i\psi_1(x_1,\xi)} e^{i\psi_2(x_2,\eta)} \Phi^1_0(\xi)\Phi^2_0(\eta)a(x,\xi,\eta) $, the following holds uniformly in $x$:
$$\sup_{|\xi|,|\eta|\neq 0}|\xi|^{-1+|\alpha|} |\eta|^{-1+|\beta|} |\partial^\alpha_\xi\partial^\beta_\eta b(x,\xi,\eta)|<\infty \quad \text{for}\; |\alpha|,|\beta|\geq 1 .$$
By Lemma \ref{le-b}, for all $\mu\in [0,1)$,
$$ |K_{00}(x,y)|\lesssim \langle\nabla_{\xi}\varphi_1(x_1,\zeta_1)-y_1\rangle^{-n-\mu} \langle\nabla_{\eta}\varphi_2(x_2,\zeta_2)-y_2\rangle^{-n-\mu}.$$
Thus, we have $$\sup_x \int |K_{00}(x,y)|dy <\infty,$$
with the non-smooth change of variables and rough non-degeneracy condition,
\begin{eqnarray*}
\sup_y \int |K_{00}(x,y)|dx &\lesssim& \sup_y  \int \langle\nabla_{\xi}\varphi_1(x_1,\zeta_1)-y_1\rangle^{-n-\mu} \langle\nabla_{\eta}\varphi_2(x_2,\zeta_2),\eta\rangle^{-n-\mu} dx \\
&\lesssim& \int \langle z \rangle^{-n-\mu} dz <\infty.
\end{eqnarray*}

So we have $T_{00}(f)$ is bounded on $L^\infty,L^1$ respectively,  and therefore bounded on $L^p$ for $p\in [1,\infty]$.

\subsection{Proof of Theorem \ref{th-b}.}
\begin{proof}
Theorem \ref{th-local} implies the desired estimate for $T_{00}$. For other cases, first consider for $j,k\geq 1$

$$T_{jk}=\sum_{\mu,\nu} T^{\mu,\nu}_{jk}.$$
Let us consider the following differetial operators
$$L^N=(1-\partial^2_{\xi_1}-2^{-j}\partial^2_{\xi'})^{N}(1-\partial^2_{\eta_1}-2^{-k}\partial^2_{\eta'})^{N}, \ \ \ N\in \mathbb{N}_+.$$
By Lemma $\ref{le-kernel}$, we have
\begin{eqnarray*}
\sup_{\xi,\eta} \|L^N(b^{\mu,\nu}_{jk}(x,\xi,\eta))\|_{L^\infty}\leq C2^{j(m^{(1)}+2 N(1-\rho^{(1)}))}\cdot2^{k(m^{(2)}+2 N(1-\rho^{(2)}))}.
\end{eqnarray*}

Now for any integer $m\geq 1$, we define the function $$g_m(z)=2^{2m}z_1^2+2^m|z'|^2 ,\quad z\in \mathbb{R}^n,$$ then from the  integration by parts,
\begin{eqnarray*}
 & & |K^{\mu,\nu}_{jk}(x,y)|\\
  &= &2^{(j+k)n} \int_{A_\xi\cap \Gamma^\mu_j} \int_{A_\eta\cap \Gamma^\nu_k}  e^{i ( 2^j \langle \nabla_\xi \varphi_1(x_1,\xi^\mu)- y_1,\xi \rangle)} e^{i ( 2^k \langle \nabla_\eta \varphi_2(x_2,\eta^\nu)-y_2, \eta \rangle )} b^{\mu,\nu}_{jk}(x,\xi,\eta)d\xi d\eta \\
  &\leq& 2^{(j+k)n} \big(1+g_j(\nabla_\xi \varphi_1(x_1,\xi^\mu)- y_1)\big)^{-N} \big(1+g_k(\nabla_\eta \varphi_2(x_2,\eta^\nu)-y_2)\big)^{-N} \int |L^N(b^{\mu,\nu}_{jk}(x,\xi,\eta))| d\xi d\eta \\
  &\leq& C_{l_1,l_2} 2^{j(\mi+{n+1\over 2}+2N(1-\rhi))}  2^{k(\mii+{n+1\over 2}+2N(1-\rhii))} \\
   & & \quad \cdot \big(1+g_j(\nabla_\xi \varphi_1(x_1,\xi^\mu)- y_1)\big)^{-N} \big(1+g_k(\nabla_\eta \varphi_2(x_2,\eta^\nu)-y_2)\big)^{-N}
\end{eqnarray*}
holds for all non-negative integers $N$, since $|A_\xi\cap \Gamma^\mu_j|\approx 2^{-j(n-1)\over 2}$ and $|A_\eta\cap \Gamma^\nu_k|\approx 2^{-k(n-1)\over 2}$.

For any positive number $M$, say ${M\over 2}=N+\theta$, where the integer part $N=[{M\over 2}]$ and $\theta\in [0,1)$, by interpolation
\begin{eqnarray*}
  |K^{\mu,\nu}_{jk}(x,y)|&=& |K^{\mu,\nu}_{jk}(x,y)|^\theta  |K^{\mu,\nu}_{jk}(x,y)|^{(1-\theta)} \\
  &\leq & C_{N,N}^{1-\theta} C_{N+1,N+1}^\theta 2^{j(\mi+{n+1\over 2}+M(1-\rhi))}  2^{k(\mii+{n+1\over 2}+M(1-\rhii))}\\
  & & \quad \cdot \big(1+g_j(\nabla_\xi \varphi_1(x_1,\xi^\mu)- y_1)\big)^{-{M\over 2}} \big(1+g_k(\nabla_\eta \varphi_2(x_2,\eta^\nu)-y_2)\big)^{-{M\over 2}}.
\end{eqnarray*}
Thus, for any $M>n$,
$$\sup_{x} \int  |K^{\mu,\nu}_{jk}(x,y)| dy \lesssim 2^{j(\mi+M(1-\rhi))}2^{k(\mii+M(1-\rhii))}.$$
Taking advantage of the rough non-degeneracy assumption  and Corollary $\ref{changev}$, we have
\begin{eqnarray*}
  & &\sup_{y} \int  |K^{\mu,\nu}_{jk}(x,y)| dx \leq C_{N,N}^{1-\theta} C_{N+1,N+1}^\theta 2^{j(\mi+{n+1\over 2}+M(1-\rhi))}  2^{k(\mii+{n+1\over 2}+M(1-\rhii))}\\
  & & \qquad  \cdot  \sup_{y} \int \big(1+g_j(\nabla_\xi \varphi_1(x_1,\xi^\mu)- y_1)\big)^{-{M\over 2}} \big(1+g_k(\nabla_\eta \varphi_2(x_2,\eta^\nu)-y_2)\big)^{-{M\over 2}}dx \\
  &\lesssim& 2^{j(\mi+M(1-\rhi))}2^{k(\mii+M(1-\rhii))}.
\end{eqnarray*}
Thus,
\begin{eqnarray*}
  \|T_{jk} f\|_{L^1}&\leq& \sum_{\mu,\nu} \|T_{jk}^{\mu,\nu} f\|_{L^1}
  \lesssim 2^{j(\mi+{n-1\over 2}+M(1-\rhi))}2^{k(\mii+{n-1\over 2}+M(1-\rhii))}\|f\|_{L^1},
\end{eqnarray*}
is summable for $j,k$ provided $\mi<-{n-1\over 2}-M(1-\rhi), \mii<-{n-1\over 2}-M(1-\rhii)$ and $M>n$.

\vskip0.2cm

Now we are ready to consider the case $T_{0k}$, and $T_{j0}$ can be treated similarly.
\begin{eqnarray*}
  \sum_{k=1}^\infty T_{0k}(f)(x)&=& \sum_{k=1}^\infty  2^{kn} \int e^{i(\varphi_1(x_1,\xi)+ 2^k \varphi_2(x_2,\eta))}\Psi_0(\xi) \Psi(\eta) a(x,\xi,2^k\eta) \hat f(\xi,2^k\eta) d\xi d\eta \\
  &=& \sum_{k=1}^\infty 2^{kn} \int e^{i(\varphi_1(x_1,\xi)+ 2^k \varphi_2(x_2,\eta))}(\sum_\nu \chi^\nu_k)\Psi_0(\xi) \Psi(\eta) a(x,\xi,2^k\eta) \hat f(\xi,2^k\eta) d\xi d\eta \\
  &=& \sum_{k=1}^\infty \sum_\nu T_{0k}^\nu(f)(x),
\end{eqnarray*}
where the kernel of $ T_{0k}^\nu$ is given by
\begin{eqnarray*}
  & &K^{\nu}_{0k}(x,y)\\
  &=& 2^{kn} \int_{\mathbb{R}^{2n}} \Psi_0(\xi) \Psi(\eta)  \chi^\nu_k(\eta) e^{i (\varphi_1(x_1,\xi)-\langle y_1,\xi \rangle)}e^{i 2^k(\varphi_2(x_2,\eta)-\langle y_2, \eta \rangle )} a(x,\xi,2^k\eta)d\xi d\eta \\
  &=& 2^{kn}\int_{\mathbb{R}^{2n}} e^{i   \langle \nabla_\xi \varphi_1(x_1,\zeta_1)- y_1,\xi \rangle} e^{i ( 2^k \langle \nabla_\eta \varphi_2(x_2,\eta^\nu)-y_2, \eta \rangle )} b^{\nu}_{0k}(x,\xi,\eta)d\xi d\eta,
\end{eqnarray*}
with
\begin{eqnarray*}
  b^{\nu}_{0k}(x,\xi,\eta)= \Psi_0(\xi) \Psi(\eta)  \chi^\nu_k(\eta)  e^{i \psi (x_1,\xi)} e^{i ( 2^k \langle \nabla_\eta \varphi_2(x_2,\eta)-\nabla_\eta \varphi_2(x_2,\eta^\nu), \eta \rangle )}a(x,\xi,2^k\eta).
\end{eqnarray*}
for some $\zeta_1\in \mathcal{S}{(\n)}$.
Note that as before for $|\alpha|\geq 1$ and all multi-indices $\beta$,
\begin{eqnarray}
\label{L1-k-in}
   \sup_{\eta \neq 0}\| \partial^\alpha_\xi \partial^\beta_\eta  b^{\nu}_{0k}(\cdot,\xi,\eta)\|_{L^\infty}\lesssim \frac{2^{k(\mii+|\beta|(1-\rhii)+\frac{|\beta'|}{2})}}{|\xi|^{|\alpha|-1}},
\end{eqnarray}
which follows from the estimates
\begin{eqnarray*}
 \sup_{\eta\neq0} \| \partial^\alpha_\xi \partial^\beta_\eta \big( \Psi_0(\xi)\Psi(\eta)e^{\psi_1(x_1,\xi)}a(x,\xi,2^k\eta) \big)\|_{L^\infty} \lesssim \frac{1}{|\xi|^{|\alpha|-1}} 2^{k(\mii+(1-\rhii))},\quad \text{for}\   \xi\neq 0,
\end{eqnarray*}

\begin{eqnarray*}
 \sup_{\eta\in A_\eta}|\partial^\alpha_\eta \chi^\nu (\eta)|\leq C_\beta 2^{\frac{k|\beta'|}{2}},\quad \sup_{\eta \in A_\eta \cap \Gamma^\nu_k}\|\partial^\beta_\eta \big( e^{i ( 2^k \langle \nabla_\eta \varphi_2(x_2,\eta^\nu)-\nabla_\eta \varphi_2(x_2,\eta), \eta \rangle )} \big)\|_{L^\infty}\leq C_\beta 2^{\frac{k|\beta'|}{2}}.
\end{eqnarray*}
Using $\eqref{L1-k-in}$, we have
$$
\sup_{\xi\neq0}|\xi|^{-1+|\alpha|} \|\partial^\alpha_\xi L^N b^\nu_{0k}(x,\xi,\eta) \|_{L^\infty}\lesssim 2^{k(\mii+2N(1-\rhii))}.
$$

Then for any integer $N\geq 0$, consider the operator $J=1-\partial^2_{\eta_1}-2^{-k}\partial^2_{\eta'}$, then
\begin{eqnarray*}
  |T^\nu_{0k}(x,y)|&\lesssim& 2^{kn} \big(1+g_k(y_2-\nabla_\eta \varphi_2(x_2,\eta^\nu))\big)^{-N} \\
  & & \ \cdot \bigg|\int \bigg(\int e^{i   \langle \nabla_\xi \varphi_1(x_1,\zeta_1)- y_1,\xi \rangle} e^{i ( 2^k \langle \nabla_\eta \varphi_2(x_2,\eta^\nu)-y_2, \eta \rangle )} J^N b^\nu_{0k}(x,\xi,\eta) d\xi\bigg) d\eta\bigg|\\
  &\lesssim& 2^{kn} \big(1+g_k(y_2-\nabla_\eta \varphi_2(x_2,\eta^\nu))\big)^{-N}\int_{T^\nu_k \cap A_\eta} \frac{2^{k(\mii+2N(1-\rhii))}}{\langle y_1-\nabla\varphi_1(x_1,\zeta_1)\rangle^{n+s}} d\eta \\
  &\lesssim&  \frac{2^{k(\mii+{n+1\over 2}+2N(1-\rhii))}}{\langle y_1-\nabla\varphi_1(x_1,\zeta_1)\rangle^{n+s}} \big(1+g_k(y_2-\nabla_\eta \varphi_2(x_2,\eta^\nu))\big)^{-N}
\end{eqnarray*}
for all $s\in [0,1)$, where we use the single parameter version of Lemma $\ref{le-b}$. With the similar interpolation argument as the previous case, we can conclude that for any real number $M>n$,
\begin{eqnarray*}
  |T^\nu_{0k}(x,y)|\lesssim \frac{2^{k(\mii+{n+1\over 2}+M(1-\rhii))}}{{\langle y_1-\nabla\varphi_1(x_1,\zeta_1)\rangle^{n+s}} } \big(1+g_k(y_2-\nabla_\eta \varphi_2(x_2,\eta^\nu))\big)^{-{M\over 2}},
\end{eqnarray*}
which implies
\begin{eqnarray*}
  \sup_x \int |T^\nu_{0k}(x,y)|dy\lesssim 2^{k(\mii+M(1-\rhii))},
\end{eqnarray*}
and
\begin{eqnarray*}
  \sup_y \int |T^\nu_{0k}(x,y)|dx\lesssim 2^{k(\mii+M(1-\rhii))}.
\end{eqnarray*}

Now we can conclude that
\begin{eqnarray*}
\sum_k \|T_{0k}^\nu f\|_{L^1}&\leq& \sum_k \sum_\nu \|T_{0k}^\nu f\|_{L^1} \lesssim \sum_k 2^{k(\mii+{n-1\over 2}+M(1-\rhii))}\|f\|_{L^1}\\
&\leq & \|f\|_{L^1}
\end{eqnarray*}
provided $\mii<-{n-1\over 2}-M(1-\rhii)$ and $M>n$.

\end{proof}

\subsection{Proof of  Theorem \ref{th-c}, the $L^2$ boundedness.}
\begin{proof}
 The $L^2$ boundedness of $T_{00}(f)$ follows from Theorem \ref{th-local}.

\vskip0.2cm

  Then we consider the case $T^{\mu,\nu}_{jk}$, we define an operator
  $$S^{\mu,\nu}_{jk}(\hat f):= T^{\mu,\nu}_{jk}(f),$$
  so it suffices to prove  $$\sum_{j,k}\sum_{\mu,\nu}\|S^{\mu,\nu}_{jk}( f)\|_{L^2}\lesssim \|f\|_{L^2}.$$

Instead, we can consider the operator $  S^{\mu,\nu}_{jk} (S^{\mu,\nu}_{jk})^* (f)$. Its kernel is given by

\begin{eqnarray*}
  & &R^{\mu,\nu}_{jk}(x,y)\\
  &=& 2^{(j+k)n} \int e^{i2^j(\varphi_1(x_1,\xi)-\varphi_1(y_1,\xi))} e^{i2^k(\varphi_2(x_2,\eta)-\varphi_2(y_2,\eta))}\\
  & &\ \cdot \chi^\mu_j(\xi)\chi^\nu_k(\eta)\Psi(\xi)\Psi(\eta) a(x,2^j\xi,2^k\eta)\bar a(y,2^j\xi,2^k\eta) d\xi d\eta \\
  &=& 2^{(j+k)n}\int_{\mathbb{R}^{2n}}  e^{i2^j\langle \nabla_\xi \varphi_1(x_1,\xi^\mu)-\nabla_\xi\varphi_1(y_1,\xi^\mu),\xi \rangle} e^{i2^k\langle\nabla_\eta \varphi_2(x_2,\eta^\nu)-\nabla_\eta \varphi_2(y_2,\eta^\nu),\eta \rangle} \\ & & \quad\cdot  b^{\mu,\nu}_{jk}(x,\xi,\eta)\bar b^{\mu,\nu}_{jk}(y,\xi,\eta)d\xi d\eta
\end{eqnarray*}

with
\begin{eqnarray*}
  b^{\mu,\nu}_{jk}(x,\xi,\eta)= e^{i2^j(\langle\nabla_\xi(x_1,\xi)-\nabla_\xi(x_1,\xi^\mu),\xi\rangle)} e^{i2^k(\langle \nabla_\eta(x_2,\eta)-\nabla_\eta(x_2,\eta^\nu),\eta\rangle )}\Psi(\xi) \Psi(\eta)\chi^\mu_j(\xi)  \chi^\nu_k(\eta) a(x,2^j\xi,2^k\eta).
\end{eqnarray*}

We consider the operator as before
$$L=(1-\partial^2_{\xi_1}-2^{-j}\partial^2_{\xi'})(1-\partial^2_{\eta_1}-2^{-k}\partial^2_{\eta'}).$$
Then as before we have
$$\sup_{\xi,\eta}\|L^N b^{\mu,\nu}_{jk}(\cdot,\xi,\eta)\|_{L^\infty}\lesssim 2^{j(\mi+2N(1-\rhi))} 2^{k(\mii+2N(1-\rhii))}.$$
Also, for all non-negative integers $N$, with $g_j(z)=2^{2j}z_1^2+2^j|z'|$ and $g_k(z)=2^{2k}z_1^2+2^k|z'|$,
\begin{gather*}
   |R^{\mu,\nu}_{jk}(x,y)|\lesssim  2^{(j+k)n}  \int\big| L^N(b^{\mu,\nu}_{jk}\big(x,\xi,\eta)\bar b^{\mu,\nu}_{jk}(y,\xi,\eta)\big)\big|  d\xi d\eta \\
  \cdot \big(1+g_j(\nabla_\xi \varphi_1(x_1,\xi^\mu)- \nabla_\xi \varphi_1(y_1,\xi^\mu))\big)^{-N}  \big(1+g_k(\nabla_\eta \varphi_2(x_2,\eta^\nu)-\nabla_\eta \varphi_2(y_2,\eta^\nu))\big)^{-N}.
\end{gather*}
Using the interpolation argument as before, we have for all real $M>0$,
\begin{gather*}
  |R^{\mu,\nu}_{jk}(x,y)| \lesssim 2^{j(2\mi+{n+1\over 2}+2M(1-\rhi))} 2^{k(2\mii+{n+1\over 2}+2M(1-\rhii))}\\
 \cdot \big(1+g_j(\nabla_\xi \varphi_1(x_1,\xi^\mu)- \nabla_\xi \varphi_1(y_1,\xi^\mu))\big)^{-M}  \big(1+g_k(\nabla_\eta \varphi_2(x_2,\eta^\nu)-\nabla_\eta \varphi_2(y_2,\eta^\nu))\big)^{-M}
\end{gather*}
With the non-smooth change of variable argument, it follows that
\begin{eqnarray*}
  \int |R^{\mu,\nu}_{jk}(x,y)| dy &\lesssim& 2^{j(2\mi+{n+1\over 2}+2M(1-\rhi))} 2^{k(2\mii+{n+1\over 2}+2M(1-\rhii))}\\
  & &\cdot \int_{A_\xi\cap T^\mu_j} \int_{A_\eta\cap T^\nu_k} (1+g_j(z_1))^{-M} (1+g_k(z_2))^{-M} dz_1 dz_2 \\
  & \lesssim & 2^{j(2\mi+2M(1-\rhi))} 2^{k(2\mii+2M(1-\rhii))}
\end{eqnarray*}
whenever $M>n/2.$

\vskip0.2cm

Note that $R_{jk}^{\mu,\nu}(x,y)$ is symmetric, we have
\begin{eqnarray}
&&\quad \|S_{jk}^{\mu,\nu}(S_{jk}^{\mu,\nu})^* (f)\|_{L^2}\leq  \bigg(\int\bigg\{\int |R_{jk}^{\mu,\nu}(x,y)|\cdot|f(y)|dy\bigg\}^2dx\bigg)^{\frac12}\nonumber\\
&&\leq\bigg\{\int \bigg(\int |R_{jk}^{\mu,\nu}(x,y)|dy\bigg)\cdot\bigg(\int |R_{jk}^{\mu,\nu}(x,y)|\cdot|f(y)|^2dy\bigg) dx\bigg\}^{\frac12} \nonumber \\
&&\leq\bigg(\sup_x\int |R_{jk}^{\mu,\nu}(x,y)|dy\bigg)^{\frac12}\cdot\bigg(\sup_y\int |R_{jk}^{\mu,\nu}(x,y)|dx\bigg)^{\frac12}\cdot\|f(y)\|_{L^2}\label{young} \\
&&=\sup_x\int |R_{jk}^{\mu,\nu}(x,y)|dy\cdot\|f\|_{L^2} \nonumber\\
&&\lesssim 2^{j(2\mi+2M(1-\rhi))} 2^{k(2\mii+2M(1-\rhii))} \|f\|_{L^2}\nonumber.
\end{eqnarray}
Then we can conclude
\begin{eqnarray*}
  \|T(f)\|_{L^2} &\leq &\sum_{j,k\geq 1}\sum_{\mu,\nu}\|T_{jk}^{\mu,\nu}(f)\|_{L^2}\leq \sum_{j,k\geq 1}\sum_{\mu,\nu} \left\{\|S_{jk}^{\mu,\nu}(S_{jk}^{\mu,\nu})^*(f)\|_{L^2}\|f\|_{L^2}\right\}^{1\over 2}\\
   &\lesssim &\sum_{j,k\geq 1} \left\{2^{j(2\mi+{n-1\over 2}+2M(1-\rhi))} 2^{k(2\mii+{n-1\over 2}+2M(1-\rhii))} \|f\|_{L^2}^2\right\}^{1\over 2}\\
   &\lesssim & \|f\|_{L^2}
\end{eqnarray*}
provided $\mi<-{n-1\over 4}+(\rhi-1)M,\mii<-{n-1\over 4}+(\rhii-1)M$ and $M>n/2$.

\vskip0.2cm

Now we turn to the estimate for $T^{\nu}_{0k}$, as before we define $S^\nu_{0k}(\hat f)= T^{\nu}_{0k}(f)$, and we will show  $$\sum_k \sum_\nu \| S^\nu_{0k}(f)\|_{L^2}\lesssim \|f\|_{L^2}.$$
Consider the kernel of the operator $ (S^\nu_{0k})( S^\nu_{0k})^* (f)$,
\begin{eqnarray*}
  & &R^{\nu}_{0k}(x,y)\\
  &=& 2^{kn} \int e^{i(\varphi_1(x_1,\xi)-\varphi_1(y_1,\xi))} e^{i2^k(\varphi_2(x_2,\eta)-\varphi_2(y_2,\eta))}\chi^\nu_k(\eta)\Psi_0(\xi)\Psi(\eta) a(x,\xi,2^k\eta)\bar a(y,\xi,2^k\eta) d\xi d\eta \\
  &=& 2^{kn}\int_{\mathbb{R}^{2n}} e^{i\langle\nabla_\xi \varphi_1(x_1,\zeta_1)-\nabla_\xi \varphi_1(y_1,\zeta_1),\xi \rangle} e^{i2^k\langle\nabla_\eta \varphi_2(x_2,\eta^\nu)-\nabla_\eta \varphi_2(y_2,\eta^\nu),\eta \rangle}  \\
  & & \qquad \qquad \cdot  b^{\nu}_{0k}(x,\xi,\eta)\bar b^{\nu}_{0k}(y,\xi,\eta)d\xi d\eta
\end{eqnarray*}

with
\begin{eqnarray*}
  b^{\nu}_{0k}(x,\xi,\eta)= e^{i \psi_1(x_1,\xi)} e^{i2^k(\langle \nabla_\eta(x_2,\eta)-\nabla_\eta(x_2,\eta^\nu),\eta\rangle )} \chi^\nu_k(\eta)\Psi_0(\xi)\Psi(\eta) a(x,\xi,2^k\eta).
\end{eqnarray*}
Define the operator $J=1-\partial^2_{\eta_1}-2^{-k}\partial^2_{\eta'}$, as before we have for $|\alpha|\geq 1$ and all integers $N\geq 0$,
\begin{equation}
\label{L2-k-in}
\sup_{\xi,\eta}|\xi|^{|\alpha|-1}\|\partial^\alpha_\xi J^N \big( b^{\nu}_{0k}\big)(x,\xi,\eta)\|_{L^\infty}\lesssim 2^{k(m+2N(1-\rhii))}.
\end{equation}
With $g_k(z)= 2^{2k}z_1^2+2^k |z'|^2$, $\eqref{L2-k-in}$ and lemma $\ref{le-b}$, we find that
\begin{eqnarray*}
  |R^{\nu}_{0k}(x,y)|
  &\lesssim& 2^{kn} \big(1+g_k(\nabla_\eta \varphi_2(x_2,\eta^\nu)-\nabla_\eta \varphi_2(y_2,\eta^\nu))\big)^{-N} \\
  & &\ \bigg|\int \bigg(\int e^{i\langle\nabla_\xi \varphi_1(x_1,\zeta_1)-\nabla_\xi \varphi_1(y_1,\zeta_1),\xi \rangle} e^{i2^k\langle\nabla_\eta \varphi_2(x_2,\eta^\nu)-\nabla_\eta \varphi_2(y_2,\eta^\nu),\eta \rangle}\\
   & & \quad \cdot J^N\big(b^{\nu}_{0k}(x,\xi,\eta)\bar b^{\nu}_{0k}(y,\xi,\eta)\big)  d\xi\bigg) d\eta\bigg|\\
  &\lesssim & 2^{kn} \big(1+g_k(\nabla_\eta \varphi_2(x_2,\eta^\nu)-\nabla_\eta \varphi_2(y_2,\eta^\nu))\big)^{-N} \\
   & & \quad \cdot \int_{A_\eta\cap\Gamma^\nu_k } \frac{ 2^{k(\mii+2N(1-\rhii))}}{\langle \nabla_\xi \varphi_1(x_1,\zeta_1)-\nabla_\xi \varphi_1(y_1,\zeta_1)\rangle^{n+s}}d\eta\\
  &\lesssim& \frac{2^{k(2\mii+{n+1\over 2}+2N(1-\rhii))} }{\langle \nabla_\xi \varphi_1(x_1,\zeta_1)-\nabla_\xi \varphi_1(y_1,\zeta_1)\rangle^{n+s}}\big(1+g_k(\nabla_\eta \varphi_2(x_2,\eta^\nu)-\nabla_\eta \varphi_2(y_2,\eta^\nu))\big)^{-N}
\end{eqnarray*}
holds for all $s\in[0,1)$ and $N$ replaced by any positive real number $M$ due to the interpolation. That implies
\begin{eqnarray*}
  \sup_x\int |R_{jk}^{\mu,\nu}(x,y)|dy<2^{k(2\mii +2N(1-\rhii))},\; \sup_y\int |R_{jk}^{\mu,\nu}(x,y)|dx<2^{k(2\mii+2N(1-\rhii))}
\end{eqnarray*}
for $M>n/2$, by using the non-smooth change of variables method.

\vskip0.2cm

As in \eqref{young}, we have

\begin{eqnarray*}
\|S_{0k}^{\nu}(S_{0k}^{\nu})^* (f)\|_{L^2} &\leq&\bigg(\sup_x\int |R_{jk}^{\mu,\nu}(x,y)|dy\bigg)^{\frac12}\cdot\bigg(\sup_y\int |R_{jk}^{\mu,\nu}(x,y)|dx\bigg)^{\frac12}\cdot\|f(y)\|_{L^2} \\
&\lesssim& 2^{k(2\mii+2N(1-\rhii))} \|f(y)\|_{L^2}.
\end{eqnarray*}

Then we can conclude
\begin{eqnarray*}
  \|T(f)\|_{L^2} &\leq &\sum_{k\geq 1}\sum_{\nu}\|T_{0k}^{\nu}(f)\|_{L^2}\leq \sum_{k\geq 1}\sum_{\nu} \left\{\|S_{0k}^{\nu}(S_{0k}^{\nu})^*(f)\|_{L^2}\|f\|_{L^2}\right\}^{1\over 2}\\
   &\leq &\sum_{k\geq 1} \left\{ 2^{k(2\mii+{n-1\over 2}+2M(1-\rhii))} \|f\|_{L^2}^2\right\}^{1\over 2}\\
   &\lesssim &  \|f\|_{L^2},
\end{eqnarray*}
provided $\mii<-{n-1\over 4}+(\rhii-1)M$ and $M>n/2$. Now we are done with the $L^2$ estimate.
\end{proof}

\subsection{Proof of Theorem \ref{th-d}}

\begin{proof}
Again, consider the decomposition used as in Section \ref{sec-decomp}, the boundedness of $T_{00}$ follows from Theorem $\ref{th-local}$. For $T_{jk}^{\mu,\nu}$ with $j,k\geq 1$, recall

\begin{eqnarray*}
 T_{jk}^{\mu,\nu}(f) =  2^{(j+k)n}\int_{\mathbb{R}^{2n}}  \chi_j^\mu(\xi)  \chi_k^\nu (\eta)e^{i (2^j\varphi_1(x,\xi)+2^k\varphi_1(x,\eta))}\Psi(\xi)\Psi(\eta) a(x,2^j\xi,2^k\eta)\widehat{f}(2^j\xi,2^k\eta)d\xi d\eta,
\end{eqnarray*}
where the kernel of the operator $T^{\mu,\nu}_{jk}$ is given by
\begin{eqnarray*}
  & &K^{\mu,\nu}_{jk}(x,y)\\
  &=& 2^{(j+k)n}\int_{\mathbb{R}^{2n}}  e^{i ( 2^j \langle \nabla_\xi \varphi_1(x_1,\xi^\mu)- y_1,\xi \rangle)} e^{i ( 2^k \langle \nabla_\eta \varphi_2(x_2,\eta^\nu)-y_2, \eta \rangle )} b^{\mu,\nu}_{jk}(x,\xi,\eta)d\xi d\eta,
\end{eqnarray*}
with
\begin{eqnarray*}
  b^{\mu,\nu}_{jk}(x,\xi,\eta)&=& \Psi(\xi) \Psi(\eta)\chi^\mu_j(\xi)  \chi^\nu_k(\eta) e^{i ( 2^j \langle \nabla_\xi \varphi_1(x_1,\xi)-\nabla_\xi \varphi_1(x_1,\xi^\mu),\xi \rangle)} \\
  & & \ \ \cdot e^{i ( 2^k \langle \nabla_\eta \varphi_2(x_2,\eta)-\nabla_\eta \varphi_2(x_2,\eta^\nu), \eta \rangle )} a(x,2^j\xi,2^k\eta).
\end{eqnarray*}
As the proof for $L^1$ case, with the following operator
$$L^N=(1-\partial^2_{\xi_1}-2^{-j}\partial^2_{\xi'})^{N}(1-\partial^2_{\eta_1}-2^{-k}\partial^2_{\eta'})^{N},\ N\in \mathbb{N},$$
we get
\begin{eqnarray*}
\sup_{\xi,\eta} \|L^N(b^{\mu,\nu}_{jk}(x,\xi,\eta))\|_{L^\infty}\leq C2^{j(m^{(1)}+2 N(1-\rho^{(1)}))}\cdot2^{k(m^{(2)}+2 N (1-\rho^{(2)}))}.
\end{eqnarray*}

To prove the desired $L^\infty$ boundedness, we only need to control $\int |K^{\mu,\nu}_{jk}(x,y)| dy$.
We can write
\begin{eqnarray*}
 & &L^N \big( e^{i ( 2^j \langle \nabla_\xi \varphi_1(x_1,\xi^\mu)- y_1,\xi \rangle)} e^{i ( 2^k \langle \nabla_\eta \varphi_2(x_2,\eta^\nu)-y_2, \eta \rangle )}\big) \\
 &=& (1+g_j(\nabla_\xi \varphi_1(x_1,\xi^\mu)- y_1))^{N}(1+g_k(\nabla_\eta \varphi_2(x_2,\eta^\nu)- y_2))^{N} \\
 & & \cdot e^{i ( 2^j \langle \nabla_\xi \varphi_1(x_1,\xi^\mu)- y_1,\xi \rangle)} e^{i ( 2^k \langle \nabla_\eta \varphi_2(x_2,\eta^\nu)-y_2, \eta \rangle)},
\end{eqnarray*}
where  $g_j(z)=2^{2j}z_1^2+2^j|z'|^2, g_k(z)=2^{2k}z_1^2+2^k|z'|^2$ for $  z\in \mathbb{R}^n.$

\vskip0.2cm

Now we can write
\begin{eqnarray*}
   & & \int |K^{\mu,\nu}_{jk}(x,y_1+\nabla_\xi\varphi_1(x_1,\xi^\mu),y_2+\nabla_\eta\varphi_1(x_2,\eta^\nu))|dy\\
   &=&  2^{(j+k)n} \int |\widetilde{b^{\mu,\nu}_{jk}}(x,y)|dy \\
   &=& 2^{(j+k)n} \big( \int_{\substack{g_j(y_1)\leq 2^{-2j\rhi}\\{g_k(y_2)\leq 2^{-2k\rhii}}}}+  \int_{\substack{g_j(y_1)\leq 2^{-2j\rhi}\\{g_k(y_2)\geq 2^{-2k\rhii}}}}+  \int_{\substack{g_j(y_1)\geq 2^{-2j\rhi}\\{g_k(y_2)\leq 2^{-2k\rhii}}}}+  \int_{\substack{g_j(y_1)\geq 2^{-2j\rhi}\\{g_k(y_2)\geq 2^{-2k\rhii}}}}\big) |\widetilde{b^{\mu,\nu}_{jk}}(x,y)|dy \\
   &=& 2^{(j+k)n}(I_1+I_2+I_3+I_4),
\end{eqnarray*}
where
$$\widetilde{b^{\mu,\nu}_{jk}}(x,y)=\int e^{-i2^j\langle y_1,\xi\rangle}  e^{-i2^k\langle y_2,\eta\rangle} b^{\mu,\nu}_{jk}(x,\xi,\eta) d\xi d\eta.$$

For $I_1$, using the Plancherel's theorem,
\begin{eqnarray*}
  I_1&\leq &\left(\int_{\substack{g_j(y_1)\leq 2^{-2j\rhi}\\{g_k(y_2)\leq 2^{-2k\rhii}}}} dy\right)^{\frac{1}{2}} \left(\int |\widetilde{b^{\mu,\nu}_{jk}}(x,y)|^2dy \right)^{\frac{1}{2}} \\
  &\lesssim & 2^{-(j+k){n+1\over 4}} 2^{-(j+k){n\over 2}}\left(\int_{\substack{|y_1|\leq 2^{-j\rhi}\\{|y_2|\leq 2^{-k\rhii}}}} dy\right)^{\frac{1}{2}} \left(\int |{b^{\mu,\nu}_{jk}}(x,\xi,\eta)|^2 d\xi d\eta\right)^{\frac{1}{2}} \\
  &\lesssim& 2^{-j({n+1\over 4}+{n\rhi\over 2}-\mi+{n-1\over 4}+{n\over 2})}2^{-k({n+1\over 4}+{n\rhii\over 2}-\mii+{n-1\over 4}+{n\over 2})}\\
  &\lesssim & 2^{-j(n-\mi+{n\rhi\over 2})}2^{-k(n-\mii+{n\rhii\over 2})}.
\end{eqnarray*}

For $I_4$, suppose $l>n/4$ is a non-negative integer, first consider the following estimate
\begin{eqnarray*}
  & &\left(\int |\widetilde{b^{\mu,\nu}_{jk}}(x,y)|^2 (1+g_j(y_1))^{2l}(1+g_k(y_2))^{2l}dy \right)^{1\over 2}\\
  &\lesssim& 2^{-{(j+k)n\over 2}}\left(\int |L^l b^{\mu,\nu}_{jk}(x,\xi,\eta) |^2 d\xi d\eta\right)^{1\over 2} \\
  &\lesssim& 2^{j(\mi+2l(1-\rhi)-{n-1\over 4}-{n\over 2})}2^{k(\mii+2l(1-\rhii)-{n-1\over 4}-{n\over 2})}.
\end{eqnarray*}
If $l$ is not an integer, we write $l=[l]+\{l\}$, where $[l]$ is the integer part and $\{l\}\in (0,1)$. Using the H\"older's inequality, we have
\begin{eqnarray*}
  & & \int |\widetilde{b^{\mu,\nu}_{jk}}(x,y)|^2 (1+g_j(y_1))^{2l}(1+g_k(y_2))^{2l}dy\\
  &=& \int |\widetilde{b^{\mu,\nu}_{jk}}(x,y)|^{2\{l\}} |\widetilde{b^{\mu,\nu}_{jk}}(x,y)|^{2-2\{l\}} (1+g_j(y_1))^{2\{l\}([l]+1)}(1+g_j(y_1))^{^{2[l](1-\{l\})}} \\ & & \quad\  \cdot(1+g_k(y_2))^{2\{l\}([l]+1)}(1+g_k(y_2))^{^{2[l](1-\{l\})}}dy\\
  &\leq& \left(\int |\widetilde{b^{\mu,\nu}_{jk}}(x,y)|^2  (1+g_j(y_1))^{2([l]+1)}(1+g_k(y_2))^{2([l]+1)}\right)^{\{l\}}\\
   & & \quad \ \cdot \left(\int |\widetilde{b^{\mu,\nu}_{jk}}(x,y)|^2  (1+g_j(y_1))^{2[l]}(1+g_k(y_2))^{2[l]}\right)^{1-\{l\}} \\
   &\lesssim&   2^{-{(j+k)n}} \left(\int |L^{[l]+1} b^{\mu,\nu}_{jk}(x,\xi,\eta) |^2 d\xi d\eta\right)^{\{l\}}\left( \int |L^{[l]} b^{\mu,\nu}_{jk}(x,\xi,\eta) |^2 d\xi d\eta\right)^{1-\{l\}}\\
  &\lesssim& 2^{2j(\mi+2l(1-\rhi)-{n-1\over 4}-{n\over 2})}2^{2k(\mii+2l(1-\rhii)-{n-1\over 4}-{n\over 2})}
  \end{eqnarray*}
Now we can get for any $l>{n\over 4}$,
\begin{eqnarray*}
  I_4&\leq &\left(\int_{\substack{g_j(y_1)\geq 2^{-2j\rhi}\\{g_k(y_2)\geq 2^{-2k\rhii}}}} (1+g_j(y_1))^{-2l} (1+g_k(y_2))^{-2l} dy\right)^{\frac{1}{2}} \\
   & &\quad \ \cdot \left(\int |\widetilde{b^{\mu,\nu}_{jk}}(x,y)|^2 (1+g_j(y_1))^{2l} (1+g_k(y_2))^{2l}dy \right)^{\frac{1}{2}} \\
  &\lesssim & 2^{-(j+k){n+1\over 4}}\left(\int_{\substack{|y_1|\geq 2^{-j\rhi}\\{|y_2|\geq 2^{-k\rhii}}}} |y_1|^{-4l} |y_2|^{-4l} dy\right)^{\frac{1}{2}} 2^{j(\mi+2l(1-\rhi)-{n-1\over 4}-{n\over 2})}2^{k(\mii+2l(1-\rhii)-{n-1\over 4}-{n\over 2})}  \\
  &\lesssim &  2^{-(j+k){n+1\over 4}} 2^{-j(\rhi({n\over 2}-2l))}2^{-k(\rhii({n\over 2}-2l))}2^{j(\mi+2l(1-\rhi)-{n-1\over 4}-{n\over 2})}2^{k(\mii+2l(1-\rhii)-{n-1\over 4}-{n\over 2})}\\
  &\lesssim & 2^{-j(n-\mi+{n\rhi\over 2}-2l)}2^{-k(n-\mii+{n\rhii\over 2}-2l)}.
\end{eqnarray*}

\vskip0.3cm
Now we estimate $I_2$. For any non-negative integer $l>n/4$, let $J^l=(1-\partial^2_{\eta_1}-2^{-k}\partial^2_{\eta'})^{l}$ we have
\begin{eqnarray*}
I_2&=&\int\limits_{\gfz{g_j(y_1)\leq2^{-2j\rho^{(1)}}}{g_k(y_2)>2^{-2k\rho^{(2)}}}}
|\widetilde{b^{\mu,\nu}_{j,k}}(x,y)|dy \\
&\leq&\bigg(\int\limits_{\gfz{g_j(y_1)\leq2^{-2j\rho^{(1)}}}{g_k(y_2)>2^{-2k\rho^{(2)}}}}(1+g_k(y_2))^{-2l}dy\bigg)^{\frac12}
\cdot\bigg(\int(1+g_k(y_2))^{2l}|\widetilde{b^{\mu,\nu}_{j,k}}(x,y)|^2dy\bigg)^{\frac12}\\
&\lesssim& 2^{-j\frac {(n+1)}{4}}2^{-j\frac {n\rho^{(1)}}2} 2^{-k\frac {(n+1)}{4}} 2^{-k(\rhii({n\over2}-2l))} \bigg(\int|\widetilde{J^l(b^{\mu,\nu}_{j,k})}(x,y)|^2dy\bigg)^{\frac12}\\
&\lesssim &2^{-j\frac {(n+1)}{4}}2^{-j\frac {n(\rho^{(1)})}2}2^{-j{n\over 2}} 2^{-k\frac {(n+1)}{4}} 2^{-k(\rhii({n\over2}-2l))}2^{-k{n\over 2}}\|J^l(b^{\mu,\nu}_{j,k})(x,\xi,\eta)\|_{L^2}\\
&\lesssim &2^{-j\frac {(n+1)}{4}}2^{-j\frac {n(\rho^{(1)})}2}2^{-j{n\over 2}} 2^{-j(-\mi+{n-1\over 4})}2^{-k\frac {(n+1)}{4}} 2^{-k(\rhii({n\over2}-2l))}2^{-k{n\over 2}}2^{-k(-\mii-2l(1-\rhii)+{n-1\over 4})}\\
&\lesssim & 2^{-j\big(n-m^{(1)}+\frac {n\rho^{(1)}}2\big)}2^{-k\big(n-m^{(2)}-2l+\frac {n\rho^{(2)}}2\big)}.
\end{eqnarray*}
As before, we then consider the case for non-integer $l$, and we use the notations as before.
\begin{eqnarray*}
I_2&=&\int\limits_{\gfz{g_j(y_1)\leq2^{-2j\rho^{(1)}}}{g_k(y_2)>2^{-2k\rho^{(2)}}}}
|\widetilde{b^{\mu,\nu}_{j,k}}(x,y)|dy \\
&\leq&2^{-j\frac {(n+1)}{4}}2^{-j\frac {n\rho^{(1)}}2} 2^{-k\frac {(n+1)}{4}} 2^{-k(\rhii({n\over2}-2l))}
 \bigg(\int(1+g_k(y_2))^{2l}|\widetilde{b^{\mu,\nu}_{j,k}}(x,y)|^2dy\bigg)^{\frac12}\\
 &\lesssim &2^{-j\frac {(n+1)}{4}}2^{-j\frac {n\rho^{(1)}}2} 2^{-k\frac {(n+1)}{4}} 2^{-k(\rhii({n\over2}-2l))}2^{-(j+k){n\over 2}}\\
 & & \quad \ \cdot
\bigg(\int|{J^{[l]+1}(b^{\mu,\nu}_{j,k})}(x,\xi,\eta)|^2dy\bigg)^{\frac{\{l\}}{2}}
\bigg(\int|{J^{[l]}(b^{\mu,\nu}_{j,k})}(x,\xi,\eta)|^2dy\bigg)^{\frac{1-\{l\}}{2}}\\
&\lesssim &2^{-j\frac {(n+1)}{4}}2^{-j\frac {n(\rho^{(1)})}2}2^{-j{n\over 2}} 2^{-j\{l\}(-\mi+{n-1\over 4})} 2^{-j(1-\{l\})(-\mi+{n-1\over 4})} 2^{-k\frac {(n+1)}{4}} 2^{-k(\rhii({n\over2}-2l))}2^{-k{n\over 2}} \\
& &\quad \ \cdot 2^{-k\{l\}(-\mii-2([l]+1)(1-\rhii)+{n-1\over 4})}2^{-k(1-\{l\})(-\mii-2[l](1-\rhii)+{n-1\over 4})}\\
&\lesssim & 2^{-j\big(n-m^{(1)}+\frac {n\rho^{(1)}}2\big)}2^{-k\big(n-m^{(2)}-2l+\frac {n\rho^{(2)}}2\big)}.
\end{eqnarray*}

Similarly, we can get that for all nonnegative real number $l>n/4$,
 $$I_3\lesssim 2^{-j\big(n-m^{(1)}-2l+\frac {n\rho^{(1)}}2\big)} 2^{-k\big(n-m^{(2)}+\frac {n\rho^{(2)}}2\big)}.$$

Then we have $$\sup_x \int |K^{\mu,\nu}_{jk}(x,y)| dy \lesssim 2^{-j(-\mi+{n\rhi\over 2}-2l)}2^{-k(-\mii+{n\rhii\over 2}-2l)}.$$
Taking the sum over $j,k,\mu,\nu$,
\begin{eqnarray*}
   \sum_{j,k} \sum_{\mu,\nu} \|T^{\mu,\nu}_{jk}\|_{L^\infty}
  &\lesssim& \sum_{j,k} 2^{-j(-\mi+{n\rhi\over 2}-2l-{n-1\over 2})}2^{-k(-\mii+{n\rhii\over 2}-2l-{n-1\over 2})}\|f\|_{L^\infty}\\
  &\lesssim& \|f\|_{L^\infty},
\end{eqnarray*}
provided $\mi<-{n-1\over 2}+{n\rhi \over 2}-2l,\, \mii<-{n-1\over 2}+{n\rhii \over 2}-2l $ and $l>{n\over 4}$.
\vskip0.3cm

Now it remains to consider the cases $T^{\nu}_{0k}(f)$.

\begin{eqnarray*}
 T_{0k}^\nu(f)= 2^{kn}\int_{\mathbb{R}^{2n}}   \chi_k^\nu e^{i (\varphi_1(x,\xi)+2^k\varphi_1(x,\eta))}\Psi_0(\xi)\Psi(\eta) a(x,\xi,2^k\eta)\widehat{f}(\xi,2^k\eta)d\xi d\eta
\end{eqnarray*}
where the kernel of the operator $T^{\nu}_{0k}$ is given by
\begin{eqnarray*}
K^{\nu}_{0k}(x,y)=2^{kn}\int_{\mathbb{R}^{2n}}  e^{i ( \langle \nabla_\xi \varphi_1(x_1,\zeta_1)- y_1,\xi \rangle)} e^{i ( 2^k \langle \nabla_\eta \varphi_2(x_2,\eta^\nu)-y_2, \eta \rangle )} b^{\nu}_{0k}(x,\xi,\eta)d\xi d\eta,
\end{eqnarray*}
with
\begin{eqnarray*}
  b^{\nu}_{0k}(x,\xi,\eta)= \Psi_0(\xi) \Psi(\eta) \chi^\nu_k(\eta) e^{i \psi_1(x_1,\xi)} e^{i ( 2^k \langle \nabla_\eta \varphi_2(x_2,\eta)-\nabla_\eta \varphi_2(x_2,\eta^\nu), \eta \rangle )} a(x,\xi,2^k\eta).
\end{eqnarray*}
for some $\zeta_1 \in\mathcal{S}^{n-1}$.

\vskip0.2cm

Also we have
\begin{eqnarray*}
 & &J^N \big( e^{i ( 2^k \langle \nabla_\eta \varphi_2(x_2,\eta^\nu)-y_2, \eta \rangle )}\big) =(1+g_k(\nabla_\eta \varphi_2(x_2,\eta^\nu)- y_2))^{N} e^{i ( 2^k \langle \nabla_\eta \varphi_2(x_2,\eta^\nu)-y_2, \eta \rangle )},
\end{eqnarray*}
where  $ g_k(z)=2^{2k}z_1^2+2^k|z'|^2$ for $  z\in \mathbb{R}^n.$

\vskip0.2cm

Now we can do the estimate
\begin{eqnarray*}
   & & \int |K^{\nu}_{0k}(x,y_1+\nabla_\xi\varphi_1(x_1,\zeta_1),y_2+\nabla_\eta\varphi_1(x_2,\eta^\nu))|dy\\
   &=& 2^{kn} \int |\widetilde{b^{\nu}_{0k}}(x,y)|dy \\
   &=& 2^{kn} \big( \int_{g_k(y_2)\leq 2^{-2k\rhii}}+ \int_{{g_k(y_2)\geq 2^{-2k\rhii}}}\big) |\widetilde{b^{\nu}_{0k}}(x,y)|dy \\
   &=& 2^{kn}(I_5+I_6),
\end{eqnarray*}
where
$$\widetilde{b^{\nu}_{0k}}(x,y)=\int e^{-i \langle y_1,\xi\rangle}  e^{-i 2^k\langle y_2,\eta\rangle} b^{\nu}_{0k}(x,\xi,\eta) d\xi d\eta.$$

Then there holds
\begin{eqnarray*}
  I_5&\leq &\int \left(\int_{g_k(y_2)\leq 2^{-2k\rhii}} dy\right)^{\frac{1}{2}} \left(\int |\widetilde{b^{\nu}_{0k}}(x,y)|^2dy_2 \right)^{\frac{1}{2}} d y_1\\
  &\lesssim & 2^{-k{n+1\over 4}} 2^{-k{n\over 2}}\int\left(\int_{{|y_2|\leq 2^{-k\rhii}}} dy\right)^{\frac{1}{2}} \left(\int_{A_\eta\cap \Gamma^\nu_k} |\int e^{i \langle y_1,\xi\rangle} {b^{\nu}_{0k}}(x,\xi,\eta)d\xi|^2 d\eta\right)^{\frac{1}{2}} dy_1.\\
  \end{eqnarray*}
Recall
$J^N=(1-\partial^2_{\eta_1}-2^{-k}\partial^2_{\eta'})^{N}, N \in \mathbb{N},$  and for multi-index $\alpha$ with $|\alpha|\geq 1$, we have
\begin{eqnarray*}
\sup_{\xi,\eta}|\xi|^{-1+|\alpha|} \|\partial^\alpha_\xi J^N(b^{\nu}_{0k}(x,\xi,\eta))\|_{L^\infty}\lesssim 2^{k(m^{(2)}+2 N (1-\rho^{(2)}))}.
\end{eqnarray*}
Therefore by Lemma \ref{le-b}
\begin{eqnarray}
\label{est-inf-5}
|\int e^{i \langle y_1,\xi\rangle} {b^{\nu}_{0k}}(x,\xi,\eta)d\xi|\lesssim \langle y_1\rangle^{-n-s}\cdot 2^{k\mii},\\
\label{est-inf-6}|\int e^{i \langle y_1,\xi\rangle} J^l {b^{\nu}_{0k}}(x,\xi,\eta)d\xi|\lesssim \langle y_1\rangle^{-n-s}\cdot 2^{k(\mii+2l(1-\rhii))},
\end{eqnarray}
for all $s\in [0,1)$ and all non-negative integer $l$.  \eqref{est-inf-5} implies
  \begin{eqnarray*}
  I_5 &\lesssim& 2^{-k({n+1\over 4}+{n\rhii\over 2}-\mii+{n-1\over 4}+{n\over 2})}\lesssim  2^{-k(n-\mii+{n\rhii\over 2})}.
\end{eqnarray*}

The other case, taking advantage of \eqref{est-inf-6}, it follows that for any non-negative integer $l>n/4$
\begin{eqnarray*}
  I_6&\leq & \int \left(\int_{g_k(y_2)\geq 2^{-2k\rhii}}(1+g_k(y_2))^{-2l} dy_2\right)^{\frac{1}{2}} \left(\int |\widetilde{b^{\nu}_{0k}}(x,y)|^2  (1+g_k(y_2))^{2l}dy_2 \right)^{\frac{1}{2}} dy_1 \\
  &\lesssim & 2^{-k{n+1\over 4}} 2^{-k{n\over 2}}\int \left(\int_{{|y_2|\geq 2^{-k\rhii}}} |y_2|^{-4l} dy_2\right)^{\frac{1}{2}} \left(\int |\int e^{i \langle y_1,\xi\rangle} {J^l b^{\nu}_{0k}}(x,\xi,\eta)d\xi|^2 d\eta\right)^{\frac{1}{2}} dy_1\\
  &\lesssim &  2^{-k{n+1\over 4}} 2^{-k(\rhii({n\over 2}-2l))}2^{k(\mii+2l(1-\rhii)-{n-1\over 4}-{n\over 2})}\\
  &\lesssim & 2^{-k(n-\mii+{n\rhii\over 2}-2l)}.
\end{eqnarray*}
When $l>n/4$ is not an integer, the same trick as before gives the same estimate
 \begin{eqnarray*}
   I_6 \lesssim 2^{-k(n-\mii+{n\rhii\over 2}-2l)}.
 \end{eqnarray*}

Then we have $$\sup_x \int |K^{\nu}_{0k}(x,y)| dy \lesssim 2^{-k(-\mii+{n\rhii\over 2}-2l)}.$$
Taking the sum over $k,\nu$,
\begin{eqnarray*}
  \sum_{k} \sum_{\nu} \|T^{\nu}_{0k}\|_{L^\infty} \lesssim \sum_{k} 2^{-k(-\mii+{n\rhii\over 2}-2l-{n-1\over 2})}\|f\|_{L^\infty}\lesssim\|f\|_{L^\infty},
\end{eqnarray*}
provided $\mii<-{n-1\over 2}+{n\rhii \over 2}-2l $ and any $l>{n\over 4}$.

\end{proof}

\subsection{Proof of Theorem \ref{th-e}}

\begin{proof}
(a)When $1\leq p\leq2$, assume $t$ satisfies
$$\frac1p=\frac{1-t}1+\frac t2=1-\frac t2 \ \ \Rightarrow1-t=\frac2p-1,\ \ t=2-\frac2p=2\bigg(1-\frac1p\bigg)$$
By the Riesz-Th\"{o}rin Interpolation, we know $\|T\|_{L^p\rightarrow L^p}\leq\|T\|^{1-t}_{L^1\rightarrow L^1}\cdot\|T\|^t_{L^2\rightarrow L^2}$. So, when
\begin{eqnarray*}
m^{(i)}&<&\bigg(-\frac{n-1}2+n(\rho^{(i)}-1)\bigg)\cdot \bigg(\frac2p-1\bigg)+\bigg(-{n-1\over 4}+\frac n2\cdot(\rho^{(i)}-1)\bigg)\cdot2\cdot\bigg(1-\frac1p\bigg)\\
&=&\frac{n(\rho^{(i)}-1)} {p}-\frac{(n-1)}{2p}
\end{eqnarray*}
we have that $T$ is bounded on $L^p$;\\

\vskip0.2cm

(b)When $2\leq p\leq\infty$, assume $t$ satisfies
$$\frac1p=\frac{1-t}2+\frac t\infty=\frac{1-t}2 \ \ \Rightarrow1-t=\frac2p,\ \ t=1-\frac2p$$
By the Riesz-Th$\ddot{o}$rin Interpolation, we know $\|T\|_{L^p\rightarrow L^p}\lesssim\|T\|^{1-t}_{L^2\rightarrow L^2}\cdot\|T\|^t_{L^\infty\rightarrow L^\infty}$, So, when
\begin{eqnarray*}
m^{(i)}&<&\left(-{n-1\over 4}+\frac n2\cdot(\rho^{(i)}-1)\right)\cdot\frac2p+\bigg(-\frac{n-1}2+\frac n2(\rho^{(i)}-1)\bigg)\cdot \bigg(1-\frac2p\bigg)\\
&=& \frac{n(\rho^{(i)}-1)}{2}-\frac{n-1}{2}\big(1-{1\over p}\big)
\end{eqnarray*}
we have that $T$ is bounded on $L^p$.
\end{proof}

{\bf Acknowledgement} We like to thank Chris Sogge for his valuable comments on  our work and for his encouragement 
to consider possible   applications of rough bi-parameter Fourier integral operators in PDEs in the future. 

%%%%%%%%%%%%%%%%%%%%%%%%%%%%%%%%%%%%%%%%%%%%%%%%%%%%%%

\end{document}